\newcommand{\Rmnum}[1]{\expandafter\@slowromancap\romannumeral #1@}
\theoremstyle{plain}
\newtheorem{theorem} {Theorem} [section]
\newtheorem{lemma} [theorem]{Lemma}
\newtheorem{proposition}[theorem]{Proposition}
\newtheorem{corollary} [theorem]{Corollary}
\theoremstyle{definition}
\newtheorem{definition} [theorem] {Definition}
\newtheorem{example}[theorem] {Example}
\numberwithin{equation}{section}
\title[totally geodesic discs]{Holomorphicity of totally geodesic Kobayashi isometry between bounded symmetric domains}
\author[S.-Y. Kim, A. Seo ]{Sung-Yeon Kim,  Aeryeong Seo}
\address{S.-Y. Kim: Center for Complex Geometry, Institutue for Basic Science, 
55, Expo-ro, Yuseong-gu, Daejeon, Korea, 34126 }
\email{sykim8787@ibs.re.kr}
\address{A. Seo: Department of Mathematics,
Kyungpook National University,
Daegu 41566, Republic of Korea}%
\email{aeryeong.seo@knu.ac.kr}
\subjclass[2010]{32F45, 32Q45, 32M15, 53C35}
\keywords{bounded symmetric domain, Kobayashi metric, totally geodesic isometric embedding, holomorphicity}
\begin{document}
\maketitle

\markboth{ }{}

\begin{abstract}
In this paper, we study the holomorphicity of totally geodesic Kobayashi isometric embeddings between bounded symmetric domains. 
First we show that for a $C^1$-smooth totally geodesic Kobayashi isometric embedding $f\colon \Omega\to\Omega'$ where $\Omega$, $\Omega'$ 
are bounded symmetric domains, if $\Omega$ is irreducible and $\text{rank}(\Omega) \geq \text{rank}(\Omega')$ or more generally, $\text{rank}(\Omega) \geq \text{rank}(f_*v)$ for any tangent vector $v$ of $\Omega$, then $f$ is either holomorphic or anti-holomorphic.
Secondly we characterize $C^1$ Kobayashi isometries from a reducible bounded symmetric domain to itself. 
\end{abstract}
\section{Introduction}

In the geometric theory of holomorphic functions on complex manifolds, the
concept of invariant metrics plays an important role. 
Invariant metrics refer to Finsler or Hermitian metrics which are invariant with respect to biholomorphic mappings. 
Within the theory of several complex variables the two most important examples of such objects are the Bergman and Kobayashi metrics.
In general for a complex manifold and its invariant metric, the isometry group properly contains the set of holomorphic and anti-holomorphic diffeomorphisms. However, these two coincide under certain conditions, for example if one considers the Bergman metrics on $C^\infty$ strongly pseudoconvex domains in the complex Euclidean spaces (\cite[Theorem 1.17]{Greene_Krantz_1982}).

It is natural to consider the question, under which circumstances any isometry between complex manifolds equipped with their invariant metrics is either holomorphic or anti-holomorphic.
The goal of this paper is to investigate the holomorphicity of the totally geodesic isometric embeddings in the case when complex manifolds are bounded symmetric domains equipped with Kobayashi metrics.

There are a few works on this direction. 
Let $\Omega\subset \mathbb C^n$, $\Omega'\subset \mathbb C^m$ be bounded domains.
Let $f\colon \Omega\rightarrow \Omega'$ be an isometric embedding with respect to the Kobayashi or Carath\'eodory metrics.
By using Pinchuk's scaling method  Seshadri-Verma \cite{Seshadri_Verma_2006} proved that if $\Omega$ and $\Omega'
$ are strongly pseudoconvex domains with $m=n$ and $f$ extends $C^1$-smoothly to the boundary, then $f$ should be biholomorphic or anti-biholomorphic. 
If $\Omega$ and $\Omega'$ are $C^3$-smooth strongly convex domains, then 
Gaussier-Seshadri \cite{Gaussier_Seshadri_2013} showed that any totally geodesic disc is either holomorphic or anti-holomorphic. Moreover they proved that $f$ has the same property as well. 
More recently  Antonakoudis \cite{Antonakoudis_2017} obtained corresponding result when $\Omega$ and  $\Omega'$ are complete disc-rigid domains which include strongly convex bounded domains and Teichm\"uller spaces of finite dimension. 

One common ingredient in the proofs of these results is the unique existence of complex geodesic containing given two points in $\Omega$ or in the direction of given vector on $\Omega$.
For strongly convex domains,  it is due to a work of Lempert \cite{Lempert_1981, Lempert_1982}
and for Teichm\"uller spaces, it is a classical result (see \cite{Earle_Kra_Krushkal_1994}).

For a bounded symmetric domain $\Omega$ with rank at least two, 
as Antonakoudis pointed out in \cite{Antonakoudis_2017}, there exists a totally geodesic disc $f\colon \Delta \rightarrow \Omega$ which is neither holomorphic nor anti-holomorphic.
Such disc is given by
$$
f(z) = (z,\bar z)\in \Delta^2\subset \Omega,
$$
where $\Delta^2\subset \Omega$ is a totally geodesic bidisc in $\Omega$ with respect to the Bergman metric.
This misfortune leads us to examine how totally geodesic isometric embeddings into bounded symmetric domains behave.

Motivated by the work of Tsai(\cite{Tsai_1993}) about the rigidity of proper holomorphic mappings between bounded symmetric domains, 
we obtain the following theorem.

\begin{theorem}\label{main}
Let $\Omega$ and $\Omega'$ be bounded symmetric domains and let $f\colon\Omega\to \Omega'$ be a $C^1$-smooth totally geodesic isometric embedding with respect to their Kobayashi metrics. Suppose that $\Omega$ is irreducible. Suppose further that
$${\rm rank }(\Omega)\geq {\rm rank }(\Omega').$$
Then 
$$ {\rm rank }(\Omega)={\rm rank }(\Omega')$$
and $f$ is either holomorphic or anti-holomorphic.
\end{theorem}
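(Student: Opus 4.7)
I would begin by using homogeneity to reduce to a linear statement at one point. Precomposing and postcomposing with domain automorphisms lets me assume $f(0)=0$, where $0$ denotes the Harish-Chandra origin in each domain. Writing $L:=df_{0}:T_{0}\Omega\to T_{0}\Omega'$, the Kobayashi infinitesimal norm at the origin of a bounded symmetric domain is the spectral norm $\|v\|=\max_{i}|\lambda_{i}|$ coming from the spectral decomposition $v=\sum\lambda_{i}e_{i}$ in a minimal tripotent frame, so $L$ is a real-linear isometry of spectral norms. Moreover, because $f$ is totally geodesic, $L$ sends every real $2$-plane tangent to a totally geodesic isometric disc in $\Omega$ onto a $2$-plane tangent to a totally geodesic isometric disc in $\Omega'$.

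The central step, in the spirit of Tsai's rigidity analysis \cite{Tsai_1993}, is to show that $L$ preserves the characteristic subvariety of rank-$1$ vectors, i.e.\ sends $\mathcal{S}\subset T_{0}\Omega$ into $\mathcal{S}'\subset T_{0}\Omega'$. A vector $v$ has rank $1$ if and only if $\mathrm{span}_{\mathbb{R}}(v,Jv)$ integrates to a holomorphic isometric disc. By the Polydisc Theorem, any totally geodesic isometric disc through $0$ tangent to a rank-$r$ direction sits inside the corresponding maximal polydisc $\Delta^{r}$ as a map $z\mapsto(\varepsilon_{1}z,\dots,\varepsilon_{r}z)$ with each $\varepsilon_{i}$ equal to either $z$ or $\bar z$. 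The hypothesis $\mathrm{rank}(\Omega)\geq \mathrm{rank}(\Omega')$ enters essentially here: it bounds the spectral rank of $Lv$ by $\mathrm{rank}(\Omega')$, and prevents the mixed totally geodesic discs of Polydisc type in $\Omega'$ (such as $z\mapsto(z,\bar z)$) from absorbing the image of a holomorphic complex-geodesic plane. A direct comparison of candidate tangent $2$-planes then forces $L(\mathrm{span}_{\mathbb{R}}(v,Jv))$ to be either $J'$-invariant or $J'$-anti-invariant, so that $Lv$ itself has rank $1$.

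Once $L$ is known to preserve rank-$1$ vectors, a Tsai-type algebraic rigidity argument finishes the infinitesimal problem. The irreducibility of $\Omega$ ensures that $\mathcal{S}$ is connected, so the pointwise dichotomy ``$L$ is complex linear, or conjugate linear, at $v$'' is constant on $\mathcal{S}$. Since rank-$1$ vectors $\mathbb{R}$-linearly span $T_{0}\Omega$, this dichotomy extends to all of $T_{0}\Omega$: $L$ is either $\mathbb{C}$-linear or conjugate linear. The isometry and linearity together force the spectral frames to match, giving $\mathrm{rank}(\Omega')=\mathrm{rank}(\Omega)$. To globalize, the same analysis applies at each point of $\Omega$ by homogeneity; continuity of $df$ and connectedness of $\Omega$ exclude a jump between the holomorphic and antiholomorphic regimes, so $f$ is holomorphic or antiholomorphic throughout.

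I expect the characteristic-variety preservation to be the chief obstacle. The Kobayashi indicatrix in rank $\geq 2$ is a non-smooth convex body with large flat faces, so isometry alone leaves a great deal of flexibility; the rigidity must come from carefully combining isometry with the totally geodesic condition, the irreducibility of $\Omega$, and the rank inequality. Concretely, one must rule out the possibility that $L$ carries a complex-geodesic $2$-plane of $\Omega$ onto the tangent $2$-plane of a mixed disc $z\mapsto(z,\bar z)$ in $\Omega'$ — which, as the example recalled in the introduction shows, is precisely the mode of failure the rank hypothesis is designed to exclude.
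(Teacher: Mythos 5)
Your high-level architecture is sound and matches the paper's endgame: once one knows that $f$ carries rank-one directions to rank-one directions (equivalently, maximal polydiscs into maximal polydiscs), the restriction to each minimal disc is holomorphic or anti-holomorphic, and connectedness of the characteristic variety $\mathcal C(\Omega)$ of the irreducible domain $\Omega$ propagates a single choice of the dichotomy to all of $\Omega$ (this is exactly Lemma \ref{holo-disc}, used together with Proposition \ref{poly-K}). However, the step you yourself flag as the chief obstacle --- preservation of the characteristic variety --- is where the proposal has a genuine gap, and the mechanism you propose for it would not work. The observation that the rank hypothesis ``bounds the spectral rank of $Lv$ by ${\rm rank}(\Omega')$'' is vacuous: every vector of $T_0\Omega'$ has rank at most ${\rm rank}(\Omega')$. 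More seriously, no pointwise comparison of candidate tangent $2$-planes at the origin can force $L({\rm span}_{\mathbb R}(v,Jv))$ to be $J'$-invariant or $J'$-anti-invariant for a single rank-one $v$: the composition of $f$ with one complex geodesic $\varphi\colon\Delta\to\Omega$ is merely some Kobayashi-isometric embedding $\Delta\to\Omega'$, and as soon as ${\rm rank}(\Omega')\geq 2$ these include $z\mapsto(z,\bar z)$ and, more generally, $z\mapsto(z,g(z,\bar z))$ for any distance-decreasing $g$ into a perpendicular factor; nothing in the linear data $df_0$ restricted to that one $2$-plane excludes these. The example recalled in the introduction shows the dichotomy genuinely fails disc-by-disc; the rank hypothesis can only be exploited by playing a whole maximal polydisc $\Delta^r\subset\Omega$ against the boundary structure of $\Omega'$ at once.

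That is what the paper's Lemma \ref{component_totally} does, and your proposal contains no analogue of it. The argument there is global and inductive on $r={\rm rank}(\Omega)$: writing $\Delta^r=\Delta_1\times\Delta_1^\perp$, one follows the family of unit-speed rays $t\mapsto f(\gamma(t),x)$, $x\in\Delta_1^\perp$; since their mutual Kobayashi distances stay bounded, all their limit sets lie in a single boundary component of $\Omega'$, which (via Lemma \ref{polyD}) traps $f(\Delta_1\times\Delta_1^\perp)$ inside a product $Q_1\times Q_1^\perp$ with the $Q_1$-components of the rays being geodesics of discs. One then shows the $Q_1^\perp$-component $\Psi_1(t,\cdot)$ is itself a totally geodesic isometric embedding of the rank-$(r-1)$ domain $\Delta_1^\perp$, so ${\rm rank}(Q_1^\perp)\geq r-1$ by induction, and the count $\dim Q_1+{\rm rank}(Q_1^\perp)\leq{\rm rank}(\Omega')\leq r$ forces $\dim Q_1=1$ and ${\rm rank}(\Omega')=r$. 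This boundary-component counting is the actual role of the hypothesis ${\rm rank}(\Omega)\geq{\rm rank}(\Omega')$, and without it (or something equivalent) your reduction to the differential at a single point cannot be completed.
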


For reducible bounded symmetric domains, we have the following result:
\begin{theorem}\label{Kobayashi isometry}
Let $\Omega=\Omega_1\times\cdots\times\Omega_m$ be a bounded symmetric domain, where each $\Omega_i$ is irreducible. 
Then, up to permutation of irreducible factors, any $C^1$ Kobayashi isometry $F\colon\Omega\to \Omega$ is of the form 
$$F(z_1,\ldots,z_m)=(f_1(z_1),\cdots,f_m(z_m)) \quad z_i\in \Omega_i,$$
where each $f_i\colon\Omega_i\to \Omega_i$ is either biholomorphic or anti-biholomorphic.
\end{theorem}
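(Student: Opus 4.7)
The plan is to reduce to Theorem~\ref{main} by first showing that $F$ respects the product decomposition of $\Omega$. The starting point is the product formula
$$k_\Omega\bigl((z_1,\ldots,z_m),(v_1,\ldots,v_m)\bigr)=\max_{1\leq i\leq m} k_{\Omega_i}(z_i,v_i),$$
which realizes the infinitesimal Kobayashi indicatrix at any $p=(p_1,\ldots,p_m)\in\Omega$ as the Cartesian product $I_p=I_{p_1}\times\cdots\times I_{p_m}$ inside $T_p\Omega=\bigoplus_i T_{p_i}\Omega_i$. As a $C^1$ Kobayashi isometry, $F$ has differential $dF_p$ that is a real-linear bijection carrying $I_p$ onto $I_{F(p)}$, and $F$ is automatically totally geodesic as a self-isometry of a complete geodesic metric space.

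The heart of the proof is to show that $dF_p$ permutes the product factors. I would first prove that each indicatrix $I_{p_i}$ is \emph{indecomposable} as a centrally symmetric convex body, admitting no non-trivial direct product decomposition. This follows from the real irreducibility of the isotropy action of $K_i$ on $T_{p_i}\Omega_i$, which itself is a consequence of complex irreducibility (equivalent to $\Omega_i$ being an irreducible Hermitian symmetric space) combined with the presence in $K_i$ of the circle acting by complex scalar multiplication: any $K_i$-invariant real subspace must be complex, and hence trivial by complex irreducibility. The essential uniqueness of product decompositions of centrally symmetric convex bodies then forces the linear isometry $dF_p\colon I_p\to I_{F(p)}$ to arise from a permutation $\sigma_p\in S_m$ of the factors together with linear isometries between corresponding factors. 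Continuity of $p\mapsto dF_p$ and discreteness of $S_m$ make $\sigma_p\equiv\sigma$ constant on the connected $\Omega$, so $dF_p(T_{p_i}\Omega_i)=T_{F(p)_{\sigma(i)}}\Omega_{\sigma(i)}$ for every $p$ and $i$. Integrating this infinitesimal factor-preservation along the slice foliations of $\Omega$, a coordinate-by-coordinate tracking then forces $F$ into the product form
$$F(z_1,\ldots,z_m)=(f_1(z_{\sigma^{-1}(1)}),\ldots,f_m(z_{\sigma^{-1}(m)})),$$
with each $f_j\colon\Omega_{\sigma^{-1}(j)}\to\Omega_j$ a $C^1$ surjective totally geodesic Kobayashi isometry between irreducible bounded symmetric domains.

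Theorem~\ref{main} now applies to each component $f_j$ (or to its inverse, which is again a $C^1$ totally geodesic Kobayashi isometry, should the rank inequality go the wrong way) and concludes that $f_j$ is holomorphic or anti-holomorphic; surjectivity and standard Schwarz-type rigidity on bounded symmetric domains upgrade this to biholomorphic or anti-biholomorphic. Relabeling the source factors by $\sigma$ produces the form asserted in the theorem. I expect the main obstacle to lie in the convex-geometric indecomposability of $I_{p_i}$: while deducing it from the real irreducibility of the isotropy action is natural, making the essential uniqueness of product decompositions precise in this setting likely requires either a careful appeal to classical convex-geometric results on centrally symmetric bodies invariant under irreducible linear groups, or a direct analysis through the Jordan triple system structure underlying $\Omega_i$.
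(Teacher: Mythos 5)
Your route is genuinely different from the paper's: you try to detect the product structure linear-algebraically, by showing that $dF_p$ is a linear isometry between Kobayashi indicatrices $I_p=I_{p_1}\times\cdots\times I_{p_m}$ and invoking indecomposability plus essential uniqueness of product ($\ell^\infty$-sum) decompositions of centrally symmetric convex bodies. The paper instead argues by induction on $m$: it restricts $F$ to a maximal-rank, maximal-dimension factor $\Omega_1$, uses Lemma~\ref{component_totally} and Proposition~\ref{poly-K} to show each slice $F(\cdot,x)$ is holomorphic or anti-holomorphic and preserves rank one vectors, uses the fact that $\mathcal C_x(\Omega)$ is the \emph{disjoint union} of the $\mathcal C_x(\Omega_i)$ together with continuity of $dF$ to force the slice into a single factor $\Omega_i\cong\Omega_1$, and then shows $f_1$ is independent of $x$ by a boundary-component argument (two points at finite Kobayashi distance must be sent to the same boundary component at infinity). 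The disjointness of the rank-one cones is, in effect, the paper's substitute for your convex-geometric decomposition step, and it comes for free from the structure theory rather than from a general theorem about convex bodies.

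The genuine gap in your argument is exactly the step you flag and do not fill: the ``essential uniqueness of product decompositions of centrally symmetric convex bodies.'' Everything hinges on it, and your irreducibility argument does not substitute for it --- it is in fact circular without it. Real irreducibility of the isotropy representation of $K_i$ only rules out \emph{$K_i$-invariant} splittings $T_{p_i}\Omega_i=V_1\oplus V_2$; to conclude that $I_{p_i}$ admits no product decomposition at all, you must already know that any such decomposition has its factor subspaces canonically attached to the convex body (so that $K_i^0$ preserves them), which is precisely the uniqueness statement you are deferring. The same statement is then needed a second time to force $dF_p$ to permute the factors. So as written the proof has a single unproven lemma sitting at its core; it is plausibly true (e.g.\ via polarity, reducing products to $\ell^1$-sums, or via a direct analysis of the maximal proper faces of $I_{p_1}\times\cdots\times I_{p_m}$, or via the Jordan triple structure), but until it is established the argument does not close. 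Two smaller points you should also make explicit: (i) that $dF_p$ preserves the \emph{infinitesimal} Kobayashi metric requires knowing that $k_\Omega(p;v)=\lim_{t\to0}d^K_\Omega(p,p+tv)/|t|$, which holds on convex domains by Lempert's work but is not automatic from the distance-isometry hypothesis alone; and (ii) surjectivity of each $f_j$ (needed to pass to the inverse when the rank inequality of Theorem~\ref{main} points the wrong way) should be derived, e.g.\ from openness of the local diffeomorphism $f_j$ together with completeness of the Kobayashi distance, rather than assumed.
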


Note that an analogous theorem of Theorem \ref{Kobayashi isometry} for proper holomorphic maps was proved by Seo \cite{Seo_2018}.

For a bounded symmetric domain $\Omega$, let $\widehat \Omega$ denote its compact dual. On $\widehat \Omega$, there is a space of minimal rational curves $C$ which are homologically generators of $H_2(\widehat \Omega, \mathbb Z)$. The intersection of $C$ with $\Omega$ is called a {\it minimal disc}, and the tangent vectors of minimal discs are called {\it rank one vectors}.
We will say a holomorphic disc into a bounded symmetric domain is of rank one if it is tangential to rank one vectors at all points. Definition of the rank of vectors is given in 2. Preliminaries.

\begin{theorem}\label{main-2}
Let $\Omega$ and $\Omega'$ be bounded symmetric domains and let $f\colon\Omega\to \Omega'$ be a $C^1$-smooth totally geodesic isometric embedding that extends continuously to the boundary with respect to their Kobayashi metrics. Suppose $\Omega$ is irreducible. Suppose further that
\begin{equation}\label{vector-rank}
{\rm rank }(\Omega)\geq {\rm rank}~f_*(v)
\end{equation}
for all $v\in T\Omega$.
Then $f$ is either holomorphic or anti-holomorphic.
\end{theorem}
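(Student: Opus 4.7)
The plan is to reduce Theorem \ref{main-2} to the already established Theorem \ref{main} by finding a totally geodesic bounded symmetric subdomain $\Omega'' \subset \Omega'$ with $f(\Omega) \subset \Omega''$ and ${\rm rank}(\Omega'') \leq {\rm rank}(\Omega)$. Once such an $\Omega''$ is identified, it is itself a bounded symmetric domain (being totally geodesic in $\Omega'$), and applying Theorem \ref{main} to $f \colon \Omega \to \Omega''$ immediately yields that $f$ is either holomorphic or anti-holomorphic.

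To construct $\Omega''$, I would first exploit the pointwise rank condition \eqref{vector-rank} infinitesimally. At each $p \in \Omega$, the real differential $df_p$ sends $T_p \Omega$ into the real subvariety of $T_{f(p)} \Omega'$ consisting of vectors of rank at most $r := {\rm rank}(\Omega)$. The $C^1$ regularity makes this a continuous field of constraints, so locally the image of $f$ should be confined to a totally geodesic complex subdomain of $\Omega'$ of rank at most $r$. To globalize this local picture, I would restrict $f$ to a maximal geodesic polydisc $P \cong \Delta^r$ through a base point, then use the continuous boundary extension to analyze the image of the distinguished torus $(\partial \Delta)^r$ in $\overline{\Omega'}$. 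The rank hypothesis, combined with the rigid orbit structure of the Shilov boundary of $\Omega'$ and its stratification by rank, should force this torus image to land inside the Shilov boundary of a single characteristic subdomain $\Omega''$ of rank $r$. The totally geodesic property together with connectedness of $\Omega$ should then propagate $f(\Omega) \subset \Omega''$.

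The principal obstacle I anticipate is this globalization step. The pointwise constraint is only a real-linear condition on $df_p$, so a priori the local target subdomains could vary from point to point or fail to patch into a single complex-analytic $\Omega''$. The continuous boundary extension --- a hypothesis not present in Theorem \ref{main} --- is the crucial new ingredient that should supply global rigidity, but executing this requires careful use of the boundary orbit structure of $\Omega'$ and the rank stratification of its Shilov boundary under only continuous regularity up to the boundary. A secondary technical issue is ensuring $\Omega''$ may be chosen independently of the reference polydisc $P$; this should follow from the transitivity of the isometry group of $\Omega$ on maximal polydiscs combined with a uniqueness property of the candidate $\Omega''$ (for instance, that it is the smallest totally geodesic subdomain of $\Omega'$ containing some distinguished boundary orbit image).
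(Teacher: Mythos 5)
Your reduction scheme has a genuine gap at its core: the existence of a single totally geodesic bounded symmetric subdomain $\Omega''\subset\Omega'$ of rank at most $r={\rm rank}(\Omega)$ with $f(\Omega)\subset\Omega''$ is essentially equivalent to the theorem itself, and nothing in your sketch actually produces it. Two concrete problems. First, the infinitesimal step already fails as stated: the set of vectors of rank at most $r$ in $T_{f(p)}\Omega'$ is a cone (a determinantal-type variety), not a linear subspace, so the inclusion of $df_p(T_p\Omega)$ in that cone does not confine the image of $df_p$ to the tangent space of any rank-$r$ characteristic subdomain; and even where such a subdomain exists pointwise, it varies with $p$ and there is no a priori patching. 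Second, and more seriously, the natural invariant set containing the image is not of the form you need: already in the base case $r=1$, the paper (Proposition \ref{1-disc}) can only place $f(\Delta)$ inside $V_q$, the union of all minimal discs through a boundary point $q$, which by Mok's theorem is the image of a holomorphic isometric embedding of a ball but is in general \emph{not} a totally geodesic characteristic subdomain of $\Omega'$; holomorphicity is then extracted from the disc-rigidity of the ball (Antonakoudis, Gaussier--Seshadri), not from Theorem \ref{main}. That the image of a rank-one disc is ultimately a minimal disc is a \emph{consequence} of holomorphicity, not a stepping stone to it. Your proposed boundary-torus argument would also have to contend with the fact that a Shilov-boundary point of $\Omega'$ lies in the closures of many rank-$r$ characteristic subdomains, so no unique candidate $\Omega''$ is singled out.

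The paper's actual route is different and does not pass through Theorem \ref{main} at all: it argues by induction on $r$ that $f$ preserves rank-one vectors, i.e. $[f_*(v)]\in\mathcal C(\Omega')$ for all $[v]\in\mathcal C(\Omega)$. The continuous boundary extension is used to show that $f$ respects the stratification of $\partial\Omega$ by boundary components and is non-constant on general positive-rank components; then $f$ is analyzed on products $\Delta_C\times C$ with $C$ a rank-$(r-1)$ characteristic subdomain, splitting $f$ into a geodesic factor $\Phi$ and an isometric factor $\Psi$ as in Lemma \ref{component_totally}, with the hypothesis \eqref{vector-rank} entering precisely to bound ${\rm rank}\,\Psi_*$ so that the induction hypothesis applies to $\Psi$. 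Once the characteristic cone is preserved, Proposition \ref{1-disc} gives holomorphicity or anti-holomorphicity on each minimal disc and Lemma \ref{holo-disc} globalizes. If you wish to salvage your approach, you would need to prove the confinement statement $f(\Omega)\subset\Omega''$ directly, and I do not see how to do that without first establishing preservation of rank-one vectors --- at which point the paper's machinery already finishes the proof.
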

\begin{corollary}
Any rank one totally geodesic Poincar\'{e} disc in a bounded symmetric domain which is continuous up to the boundary is a minimal disc.
\end{corollary}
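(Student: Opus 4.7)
The plan is to apply Theorem~\ref{main-2} directly with source $\Delta$. Let $f\colon\Delta\to\Omega$ denote the rank one totally geodesic Poincar\'e disc, continuous up to the boundary. Since $\Delta$ is an irreducible bounded symmetric domain of rank one and by hypothesis $\mathrm{rank}(f_*v)=1=\mathrm{rank}(\Delta)$ for every $v\in T\Delta$, the hypothesis \eqref{vector-rank} holds trivially. Theorem~\ref{main-2} then forces $f$ to be either holomorphic or anti-holomorphic. Observe that replacing the coordinate $z$ on $\Delta$ by $\bar z$ turns an anti-holomorphic $f$ into a holomorphic map with the same image, and the rank of every pushed-forward tangent vector is unchanged. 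Hence, for the purpose of identifying $f(\Delta)$ as a subset of $\Omega$, I may assume without loss of generality that $f$ is holomorphic.

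It remains to identify the holomorphic totally geodesic disc $f(\Delta)$, whose tangents are all of rank one, as a minimal disc. Here I would invoke the structure theory of bounded symmetric domains: at each point $p\in\Omega$ and in each rank one direction $v\in T_p\Omega$, there is a unique totally geodesic holomorphic disc tangent to $v$ at $p$, and that disc is precisely the minimal disc associated with the projective line through $[v]$ on the compact dual $\widehat{\Omega}$ under the Borel embedding. Since $f(\Delta)$ is a totally geodesic holomorphic disc tangent to the rank one vector $f_*(\partial/\partial z|_0)$ at $f(0)$, it must coincide with the minimal disc so determined.

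The only substantive step in the argument is Theorem~\ref{main-2} itself; once that is in hand, the corollary follows from the classical characterization of minimal discs as the totally geodesic holomorphic rank one discs, together with the uniqueness of such discs in a given rank one direction (a consequence of the Polydisc Theorem and the description of minimal rational curves on $\widehat{\Omega}$). I do not expect any new obstacle beyond what is already overcome in proving Theorem~\ref{main-2}.
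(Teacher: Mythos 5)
Your first step is the same as the paper's: the corollary is obtained by applying Theorem~\ref{main-2} (equivalently, its rank-one ingredient, Proposition~\ref{1-disc}/Corollary~\ref{ball to BSD}) to $f\colon\Delta\to\Omega$, and the verification of hypothesis \eqref{vector-rank} is correct. The problem is in your final identification step. The assertion that ``at each point $p\in\Omega$ and in each rank one direction $v\in T_p\Omega$ there is a unique totally geodesic holomorphic disc tangent to $v$ at $p$'' is false whenever ${\rm rank}(\Omega)\geq 2$. Take $\Omega\supset\Delta^2$ (totally geodesic, by the Polydisc Theorem) and $\varphi(z)=(z,z^2/2)$: since the first component is an automorphism of $\Delta$, $\varphi$ is a complex geodesic of $\Delta^2$, it is tangent at $0$ to the rank one vector $(1,0)$, yet its image is not the minimal disc $\{(z,0)\}$. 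What is unique is the \emph{minimal disc} through $p$ in the direction $[v]\in\mathcal C_p$, not the totally geodesic holomorphic disc; so knowing only that $f(\Delta)$ is a complex geodesic tangent to a rank one vector \emph{at one point} does not pin down its image. Note also that you never use the rank-one hypothesis away from the origin in this step, which is a sign something is lost: the paper explicitly remarks that there are many rank one holomorphic discs that are not minimal, so the global hypotheses (totally geodesic \emph{and} rank one everywhere) must both enter.

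The gap is repairable with the tools of the paper. Once $f$ is holomorphic, every real geodesic $\gamma$ of $\Delta$ maps to a real geodesic $f\circ\gamma$ that is an integral curve of $\mathcal{RC}(\Omega)$, so by Corollary~\ref{unique minimal disc} it lies in a unique minimal disc $D_\gamma$. For the geodesics through $0\in\Delta$, holomorphicity of $f$ forces every $D_\gamma$ to be the minimal disc through $f(0)$ tangent to $[f_*(\partial/\partial z|_0)]$ (uniqueness of the minimal rational curve through a point in a given characteristic direction), hence all the $D_\gamma$ coincide and $f(\Delta)$ fills out that single minimal disc. Alternatively, follow the proof of Proposition~\ref{1-disc}: $f(\Delta)\subset V_{f(z)}=F(\mathbb B^{p+1})$ and $F^{-1}\circ f$ is a holomorphic complex geodesic of the ball, whose image is a linear slice, i.e.\ a minimal disc of $\Omega$ under $F$.
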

We remark that there are a lot of rank one holomorphic discs which are not minimal discs (see \cite{Griffiths_Harris_1979, Choe_Hong_2004}).

Theorem \ref{main} and Theorem \ref{main-2} are not true anymore if there are no rank conditions:
for $p,q,p',q'\in \mathbb N$ with $p<p'$, $q<q'$, $p' \leq q$, $p\leq q$, let $\Omega=\Omega^I_{p,q}$ and $\Omega' = \Omega^I_{p', q'}$ be irreducible bounded symmetric domains of type I:
$$
\Omega^I_{p,q} := \left\{ Z \in M(p,q;\mathbb C) : I - \overline Z^t Z>0 \right\}.
$$
Then for any anti-holomorphic map $\varphi\colon \Omega^I_{p,q} \to \Omega^I_{p'-p, q'-q} $, the map 
$$
Z\mapsto \left(
\begin{array}{cc}
Z &0\\
0& \varphi(Z)
\end{array}\right)
\colon \Omega^I_{p,q}\to \Omega^I_{p', q'}
$$
is a totally geodesic isometric embedding with respect to the Kobayashi metrics.

\bigskip

{\bf Acknowledgement} 
The first author was supported by the Institute for Basic Science (IBS-R032-D1-2021-a00). The second author was supported by Basic Science Research Program through the National Research Foundation
of Korea (NRF) funded by the Ministry of Education (NRF-2019R1F1A1060175).

\section{Preliminaries}
\subsection{Bounded symmetric domains}
In this section, we recall some facts about bounded symmetric domains. For more details, see \cite{Mok89, Mok_Tsai_1992, Mok_2016}.

A bounded domain $\Omega$ in the complex Euclidean space is called symmetric if for each $p\in \Omega$, there exists a holomorphic automorphism $I_p$ such that $I_p^2$ is the identity map of $\Omega$ which has $p$ as an isolated fixed point. 
Bounded symmetric domains are homogeneous complex manifolds and their Bergman metrics are K\"ahler-Einstein with negative holomorphic sectional curvatures.
It is well known that all Hermitian symmetric spaces of non-compact type can be realized as convex bounded symmetric domains by the Harish-Chandra realizations.
Moreover there exists a one to one correspondence between the set of Hermitian symmetric spaces of the non-compact type and the compact type. 
For a bounded symmetric domain $\Omega$, the corresponding Hermitian symmetric space of the compact type is called its compact dual.
There exists a canonical embedding, which is called the Borel embedding, from $\Omega$ to its compact dual.

In what follows $M(p,q; \mathbb C)$ denotes the set of $p\times q$ matrices with complex coefficients.
The set of irreducible Hermitian symmetric spaces of non-compact type consists of four classical types and two exceptional types. We list the irreducible bounded symmetric domains which are the Harish-Chandra realizations of them as follows;
$$
\Omega^I_{p,q} := \left\{ Z \in M(p,q;\mathbb C) : I - \overline Z^t Z>0 \right\}, \quad p,q \geq 1;
$$
$$\Omega^{II}_n := \left\{ Z\in \Omega^I_{n,n} : Z^t = -Z \right\}, \quad n\geq 2;
$$
$$
\Omega^{III}_n := \left\{ Z\in \Omega^I_{n,n} : Z^t = Z \right\}, \quad n\geq 1;
$$
$$
\Omega^{IV}_n:= \left\{ (z_1, \ldots, z_n) \in \mathbb C^n : \| z\|^2 < 2, \| z\|^2 <1 + \bigg| \frac{1}{2} \sum_{j=1}^n z_j^2 \bigg|^2 \right\}, \quad n\geq 3;
$$
$$ 
\Omega_{16}^V = \left\{z\in M_{1,2}^{\mathbb O_\mathbb C} : 
1-(z|z) + (z^\#|z^\#) >0,\, 2-(z|z)>0 \right\};
$$
\begin{equation}\nonumber
\begin{aligned}
\Omega_{27}^{VI} &= 
\left\{ z\in H_3(\mathbb O_\mathbb C)  : 
1-(z|z) + (z^\#|z^\#) - |\det z|^2 >0,\right. \\
 &\quad\quad\quad\quad\quad\quad\quad\quad
\quad\quad\quad
\left. 3-2(z|z) + (z^\#|z^\#) >0, 3-(z|z)>0 \right\}.
\end{aligned}
\end{equation}
Here, $\mathbb O_\mathbb C$ is the complex 8-dimensional algebra of complex octonions. For the notation $M_{1,2}^{\mathbb O_{\mathbb C}}$, $H_3(\mathbb O_\mathbb C)$, $z^\#$ for the exceptional type domains, see \cite{Roos_2008}.

\begin{theorem}[Polydisc Theorem]\label{polydisc theorem}
Let $\Omega$ be a bounded symmetric domain and $b_\Omega$ be its Bergman metric. Let $X$ be the compact dual of $\Omega$ and $g_c$ be its K\"ahler-Einstein metric.
There exists a totally geodesic complex submanifold $D$ of $(\Omega, b_\Omega)$ such that 
$(D, b_\Omega|_D)$ is holomorphically isometric to a Poincar\'e polydisc $(\Delta^r, \rho)$ and $$
\Omega = \bigcup_{\gamma\in K} \gamma D
$$
where $K$ denotes an isotropy subgroup of $\text{Aut}(\Omega)$.
Moreover, there exists a totally geodesic complex submanifold $S$ of $(X,g_c)$
containing $D$ as an open subset such that $(S, g_c|_S)$ is isometric to a polysphere 
$((\mathbb P^1)^r, \rho_c)$ equipped with a product Fubini-Study metric $\rho_c$.
\end{theorem}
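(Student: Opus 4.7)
The plan is to carry out the standard Harish-Chandra construction via strongly orthogonal noncompact roots, which is the classical approach due to Harish-Chandra, Korányi, and Wolf. I would begin by writing $\Omega = G/K$ where $G$ is the identity component of $\text{Aut}(\Omega)$ and $K$ is the isotropy at the origin $0$ of the Harish-Chandra realization. Decompose $\mathfrak{g} = \mathfrak{k} \oplus \mathfrak{p}$, and pass to the complexification to obtain the splitting $\mathfrak{p}^{\mathbb{C}} = \mathfrak{p}^+ \oplus \mathfrak{p}^-$ into $(\pm i)$-eigenspaces of the complex structure at $0$. Choose a Cartan subalgebra $\mathfrak{h} \subset \mathfrak{k}$, which is also a Cartan subalgebra of $\mathfrak{g}$ since $\Omega$ is Hermitian symmetric, and fix an ordering of the roots compatible with the decomposition $\mathfrak{p}^{\mathbb{C}} = \mathfrak{p}^+ \oplus \mathfrak{p}^-$.

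Next I would inductively select strongly orthogonal noncompact positive roots $\gamma_1,\dots,\gamma_r$, with $r = \text{rank}(\Omega)$, using Harish-Chandra's recipe: take $\gamma_1$ to be the lowest noncompact positive root, and at each step let $\gamma_{i+1}$ be the lowest noncompact positive root strongly orthogonal to $\gamma_1,\dots,\gamma_i$. For each $\gamma_i$ pick root vectors $e_i \in \mathfrak{g}_{\gamma_i}$ normalized so that $\{e_i, \theta e_i, [e_i,\theta e_i]\}$ spans an $\mathfrak{sl}_2(\mathbb{R})$-triple, where $\theta$ is the Cartan involution. Strong orthogonality forces these $\mathfrak{sl}_2$-triples to commute, so together they generate a subalgebra isomorphic to $\mathfrak{su}(1,1)^r$. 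Exponentiating inside $G$ produces a subgroup $H \cong SU(1,1)^r$, and I would set $D := H \cdot 0$. Since the Lie algebra of $H$ is $\theta$-invariant and $J$-invariant, $D$ is a totally geodesic complex submanifold; direct computation on each factor (each $\mathfrak{sl}_2^{(i)}$ exponentiates to a totally geodesic Poincaré disc because $SU(1,1)/U(1)$ embeds as the unit disc with the Poincaré metric induced by $b_\Omega$ up to normalization) shows $(D,b_\Omega|_D)$ is holomorphically isometric to $(\Delta^r,\rho)$.

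To establish the coverage $\Omega = \bigcup_{\gamma \in K}\gamma D$, I would invoke the $KAK$ decomposition of $G$. The real subspace $\mathfrak{a} := \bigoplus_{i=1}^r \mathbb{R}\,(e_i + \theta e_i) \subset \mathfrak{p}$ is a maximal abelian subspace (its dimension equals the real rank of $G/K$, which is $r$), and Cartan's decomposition $G = KAK$ with $A = \exp \mathfrak{a}$ implies that every $G$-orbit on $\Omega$ through $0$ meets $A \cdot 0$. Since $A \cdot 0$ lies in the diagonal of $D$, for any $p \in \Omega$ we can write $p = g\cdot 0 = k_1 a k_2 \cdot 0 = k_1 a \cdot 0 \in k_1 D$, giving the union formula. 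For the compact dual statement, I would exploit duality: the same strongly orthogonal roots $\gamma_1,\dots,\gamma_r$ now define commuting $\mathfrak{su}(2)$-subalgebras inside the compact real form $\mathfrak{g}_u$ of $\mathfrak{g}^{\mathbb{C}}$, whose integrated subgroup $H_u \cong SU(2)^r$ has orbit $S := H_u \cdot 0 \subset X$ isometric to $((\mathbb{P}^1)^r,\rho_c)$. Because $\mathfrak{p}^+$ is shared by $\Omega$ and $X$ at $0$, the Borel embedding $\Omega \hookrightarrow X$ identifies $D$ with an open subset of $S$.

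The main obstacle, and the technical heart of the argument, is the construction of the strongly orthogonal root system and the verification that the resulting $\mathfrak{sl}_2$-triples genuinely pairwise commute and integrate to a subgroup whose orbit has the claimed product structure. Once those Lie-algebraic facts are in hand, the totally geodesic property follows from $\theta$-invariance of $\text{Lie}(H)$, the metric identification with $(\Delta^r,\rho)$ reduces to the rank-one case $SU(1,1)/U(1) \cong \Delta$, and the covering property reduces to the standard $KAK$ decomposition. The parallel statement on the compact dual is then essentially automatic via $c$-duality and the Borel embedding.
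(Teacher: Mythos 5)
The paper does not give a proof of this statement at all --- it is the classical Polydisc Theorem of Kor\'anyi--Wolf, quoted from the cited references \cite{Mok89, Mok_Tsai_1992}, and your argument via strongly orthogonal noncompact roots, commuting $\mathfrak{sl}_2$-triples, and the $KAK$ decomposition is precisely the standard proof found in those sources. The argument is correct, modulo two harmless slips of wording: the second member of each triple should be the conjugate root vector $e_{-\gamma_i}\in\mathfrak g_{-\gamma_i}$ rather than $\theta e_i$ (the complex-linear extension of $\theta$ acts as $-1$ on $\mathfrak p^{+}$), and $A\cdot 0$ is the totally real slice $(-1,1)^r\subset\Delta^r$ of $D$, not its diagonal --- neither affects the logic, since all that is used is $A\cdot 0\subset D$.
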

The dimension of $D$ in Theorem \ref{polydisc theorem} is called the {\it rank} of $\Omega$. 
A projective line $C\cong \mathbb P^1$ in $X$ which is a homological generator of $H_2(X,\mathbb Z)$ is called a minimal rational curve of $X$ and it is totally geodesic on $(X, g_c)$. 
Via the Harish-Chandra and Borel embedding, we call the intersection of $C$ with $\Omega$ a {\it minimal disc}.
In particular, minimal discs $\Delta$ can be precisely expressed by  $\{ (z,0,\ldots,0): |z|<1\}\subset \Delta^r \cong D$ where $D$ is the totally geodesic polydisc in Theorem \ref{polydisc theorem},
and $\{(z,0\ldots, 0)\in (\mathbb P^1)^r\cong S\}\cong \mathbb P^1\subset X$ is the minimal rational curve on $X$ such that $\mathbb P^1 \cap \Omega = \Delta$.

To each totally geodesic polydisc $\Delta^k$ with $1\leq k\leq r$ in $\Omega$, there exists a bounded symmetric subdomain $(\Delta^k)^\perp$ of rank $r-k$ in $\Omega$ such that $\Delta^k\times (\Delta^k)^\perp$ 
can be embedded in $\Omega$ as a totally geodesic submanifold with respect to the Bergman metrics.
For each $b\in (\partial \Delta)^k$, $\{b\}\times (\Delta^k)^\perp$ is a boundary component of $\Omega$, i.e. it is a maximal complex submanifold contained in $\partial \Omega$. 
Moreover for any boundary component $B$ of $\Omega$, there exists a totally geodesic polydisc $\Delta^k$ and a point $b\in (\partial\Delta)^k$ so that $B\cong \{b\}\times (\Delta^k)^\perp$.
For each irreducible bounded symmetric domain $\Omega$, $(\Delta^k)^\perp$ is given by Table \ref{characteristic subdomains} below for each $k$ with $1\leq k\leq \text{rank}(\Omega)$ and the canonical embedding into $\Omega$.

\begin{table}[ht]\caption{Characteristic subdomains }
\label{characteristic subdomains}
\begin{tabular}{c|c|c|c|c|c|c}
$\Omega$& $\Omega_{p,q}^I (p\leq q)$& $\Omega^{II}_n$& $\Omega^{III}_n$& $\Omega^{IV}_n$ & $\Omega^V_{16}$&$\Omega_{27}^{VI}$  \\[4pt]\hline 
&& &&&&\\[-8pt]
$(\Delta^k)^\perp$ & $\Omega^I_{p-k, q-k}$ & $\Omega^{II}_{n-2k}$& $\Omega_{n-k}^{III}$ & $\Delta(k=1)$ & $\mathbb B^5(k=1)$ & $\Omega_8^{IV} (k=2)$,
$\Delta(k=1)$
\end{tabular}
\end{table}
Here $\mathbb B^n:=\{z\in \mathbb C^n: |z|<1\}$ denotes the $n$-dimensional unit ball.

Denote by $T\Omega$ the holomorphic tangent bundle of $\Omega$.
For a point $x\in \Omega$, let $v\in T_x\Omega$ be a unit vector. 
If $v$ realizes the minimum of holomorphic sectional curvature of $b_\Omega$, 
then we call $v$ a rank $1$-vector (or {\it characteristic vector} in \cite{Mok89}). 
A vector $v\in T_x\Omega$ is a rank $1$-vector if and only if there exists a 
minimal disc $\Delta\subset \Omega$ such that $v\in T_x\Delta$.
For any $v\in T_x\Omega$, there exists a unique totally geodesic polydisc $\Delta^k\subset \Omega$ with minimum dimension such that $v\in T_x\Delta^k$ and we will call $v$ a rank $k$-vector.
Let $\mathcal C_x^k$ denote the set
$$
\mathcal C_x^k := \{ [v] : v \in T_x\Omega\text{ is a rank } k\text{-vector at } x\}
$$ 
in $\mathbb PT_x\Omega$.
The $k$-th characteristic bundle over $\Omega$ is defined by $$
\mathcal C^k (\Omega) := \bigcup_{x\in \Omega} \mathcal C^k_x \subset \mathbb PT\Omega.$$
We remark that $\mathcal C^k_x$ is parallel with $\mathcal C_0^k$ for any $k$ and $x\in \Omega$ in Harish-Chandra coordinates (\cite{Mok_Tsai_1992}). We will compare two vectors $v\in T_p\Omega$ and $w\in T_q\Omega$ with different $p,q \in \Omega$ using parallel translation with respect to Harish-Chandra coordinates.
For convenience, we will denote $\mathcal C^1_x$, $\mathcal C^1(\Omega)$ by $\mathcal C_x$, $\mathcal C(\Omega)$, respectively.

\begin{definition}
A smooth real submanifold $N\subset \Omega$ is called an {\it integral manifold} of $\mathcal {RC}^k(\Omega)$, if 
for any $v\in TN$, there exists $[w]\in \mathcal C^k(\Omega)$ with $w\in T\Omega$ such that $v=\text{Re}(w)$.
We will say that a map $f\colon N\rightarrow \Omega$ is tangential to $\mathcal{RC}^k(\Omega)$ if $f(N)$ is an integral manifold of $\mathcal{RC}^k(\Omega)$.
\end{definition}

Let $v$ be a unit rank 1-vector at $x\in \Omega$ and write 
$R_v$ for the Hermitian bilinear form on $T_x\Omega$ defined by 
$R_v(\xi, \eta) := R(v, \bar v , \xi, \bar \eta)$
where $R$ denotes the curvature tensor of $b_\Omega$. Then 
the eigenspace decomposition of 
$T_xX$ with respect to $R_v$ is given by 
$$
T_x \Omega = \mathbb Cv + \mathcal H_v + \mathcal N_v
$$
corresponding to the eigenvalues $2$, $1$ and $0$ respectively.
For $[v]\in \mathcal C_0$,
let $\Delta\subset \Omega$ be a minimal disc so that $v\in T_0\Delta$. Then we have 
\begin{equation}\nonumber
\mathcal N_v = T_0 \Delta^\perp.
\end{equation}
Identifying $T_{[v]} (\mathbb P T_0\Omega)$ with $T_0\Omega/\mathbb Cv$, we have $T_{[v]} \mathcal C_0 \cong (\mathbb C v + \mathcal H_v)/\mathbb C v$.

Note that any vector $v\in T\Omega$ can be expressed as a linear combination of rank one vectors by Polydisc Theorem.
For a nonzero vector $v=\sum_j c_jv_j$ with rank one vectors $v_j$, define
$$\mathcal N_{[v]}:=\bigcap_j\mathcal N_{[v_j]}.$$
Since what matters is the directions of rank one vectors $v_j$, for a totally geodesic polydisc $\Delta^s\subset \Omega$, we denote $\mathcal N_{[v]}$ by $\mathcal N_{[T_x \Delta^s]}$ when $v$ is the vector realizing the maximal holomorphic sectional curvature of $\Delta^s$ with respect to the Bergman metric.
For a totally geodesic polydisc $\Delta^k\subset \Omega$, we have
$$T_p (\Delta^k)^\perp=\bigcap_{[v]\in \mathbb PT_p\Delta^k} \mathcal N_{[v]}.$$ 
We remark that $\mathcal N_{[v]}$ is parallel along totally geodesic holomorphic disc $\Delta$ which is tangent to $v$.

\begin{lemma}[Lemma 3.6 in \cite{Mok_Tsai_1992}]\label{2-ball}
Let $u\in \mathcal H_v$ be a root vector of unit norm. 
Then either 
\begin{enumerate}
\item
$R_{u\bar  u u\bar u}=R_{v\bar v v \bar v}$ and $(\mathbb C v +\mathbb C u)\cap \Omega\cong \mathbb B^2$ is totally geodesic in $(\Omega, b_\Omega)$, or 
\item 
$R_{u\bar u u\bar u}=\frac{1}{2}R_{v\bar v v \bar v}$ and there exists $w\in \mathcal N_v$ such that 
$(\mathbb C v + \mathbb C u + \mathbb C w)\cap \Omega \cong \Omega_3^{IV}$ is totally geodesic in $(\Omega, b_\Omega)$.
\end{enumerate}
\end{lemma}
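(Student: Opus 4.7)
The plan is to work inside the Hermitian Jordan triple system on $T_x\Omega$ whose triple product $\{\cdot,\cdot,\cdot\}$ encodes the Bergman curvature tensor. After normalization, the rank-one vector $v$ is a tripotent (i.e.\ $\{v,v,v\}=v$), and its Peirce decomposition $T_x\Omega = V_2(v)\oplus V_1(v)\oplus V_0(v)$ coincides with the eigenspace decomposition $\mathbb{C}v\oplus \mathcal{H}_v \oplus \mathcal{N}_v$ of $R_v$ with eigenvalues $2,1,0$. Since $u$ is a unit-norm root vector in $V_1(v)=\mathcal{H}_v$, it is itself a tripotent. Standard Peirce calculus then shows that every basic triple product among $v$ and $u$ either lands in $\mathbb{C}v + \mathbb{C}u$ or produces the single new vector $w:=\{u,v,u\}$, which by a direct check lies in $V_0(v)=\mathcal{N}_v$; hence the Jordan subtriple $J\subseteq T_x\Omega$ generated by $(v,u)$ is contained in $\mathbb{C}v+\mathbb{C}u+\mathbb{C}w$, and the dichotomy of the lemma is governed entirely by whether $w=0$.

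In Case (1) ($w=0$), the subtriple $J = \mathbb{C}v+\mathbb{C}u$ is rank one and two-dimensional. By the classification of low-dimensional positive Hermitian Jordan triples, it is isomorphic to the triple of $\mathbb{B}^2$, and its integral submanifold in $\Omega$ is the claimed totally geodesic $\mathbb{B}^2$; inside this ball $u$ is another unit rank-one tripotent, and homogeneity gives $R_{u\bar u u\bar u}=R_{v\bar v v\bar v}$. In Case (2) ($w\neq 0$), the subtriple $J=\mathbb{C}v+\mathbb{C}u+\mathbb{C}w$ is rank two and three-dimensional; the only irreducible positive Hermitian Jordan triple of that rank and dimension is the spin factor, which is the Jordan triple of $\Omega_3^{IV}\cong\Omega_2^{III}$. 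Its exponential in $\Omega$ yields the required totally geodesic $\Omega_3^{IV}\hookrightarrow\Omega$. In this spin factor $v$ and $w$ are long-root (rank-one) tripotents while $u$ is a short-root unit vector, and a direct Peirce-based computation of $R_{u\bar u u \bar u}$ yields exactly half of $R_{v\bar v v\bar v}$; one can cross-check this concretely in the realization $\Omega_2^{III}$ with $v=E_{11},\,u=E_{12}+E_{21},\,w=E_{22}$.

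The main technical obstacle is the classification input: one must know that a rank-one two-dimensional (respectively rank-two three-dimensional) positive Hermitian Jordan subtriple is forced to be $\mathbb{B}^2$ (respectively the spin factor $\Omega_3^{IV}$). This can either be cited from the standard low-rank classification of positive Hermitian JTS's, or verified by hand in each of the six irreducible types listed in Section 2.1 using the explicit matrix realizations there, where one exhibits the vectors $v,u,w$ and computes the triple product directly. A secondary point is that the exponential of any Jordan subtriple gives a complex totally geodesic submanifold of $(\Omega,b_\Omega)$, which is standard and follows from the fact that Jordan subtriples correspond to symmetric subspaces of the Harish-Chandra realization.
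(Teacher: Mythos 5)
The paper does not actually prove this lemma: it is quoted verbatim as Lemma 3.6 of Mok--Tsai \cite{Mok_Tsai_1992}, whose argument is Lie-theoretic --- one compares the root $\mu$ of $v$ with the root $\phi$ of $u$ and splits into cases according to whether $\mu-2\phi$ is a root (this is exactly the remark the authors append immediately after the statement). Your Jordan-triple route is a genuinely different and in principle workable way to organize the same computation: the identification of the Peirce decomposition of the minimal tripotent $v$ with the eigenspace decomposition $\mathbb C v\oplus\mathcal H_v\oplus\mathcal N_v$ of $R_v$ is correct, the element $w=\{u,v,u\}$ does land in $V_0(v)=\mathcal N_v$ by the Peirce rule $\{V_1,V_2,V_1\}\subset V_0$, and your two model computations ($v=E_{11}$, $u=E_{12}$ in type I giving $w=0$; $v=E_{11}$, $u=(E_{12}+E_{21})/\sqrt2$ in $\Omega^{III}_2$ giving $w\propto E_{22}$) are the right ones.

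As written, though, one step is false and two are only asserted. The false step: a unit-norm root vector $u\in\mathcal H_v$ is \emph{not} in general a tripotent. In your own second example $\{u,u,u\}=\tfrac12 u$ for $u=(E_{12}+E_{21})/\sqrt2$, so only $\sqrt2\,u$ is a tripotent; this failure is not incidental, since for unit $u$ the identity $R_{u\bar u u\bar u}=\tfrac12 R_{v\bar v v\bar v}$ \emph{is} the statement $\{u,u,u\}=\tfrac12 u$. You therefore cannot assume $u$ is a tripotent and still hope to see the dichotomy. The asserted steps: (i) you never prove that the dichotomy is ``governed entirely by whether $w=0$'', i.e.\ that $\{u,v,u\}=0$ forces $\{u,u,u\}=u$ while $\{u,v,u\}\neq0$ forces $\{u,u,u\}=\tfrac12 u$; this equivalence is the heart of the lemma (in the root picture it is precisely the assertion about the $\phi$-string $\mu,\ \mu-\phi,\ \mu-2\phi$) and needs an actual Jordan identity or a case check, not a citation to ``standard Peirce calculus''. (ii) The classification step requires the subtriple generated by $v,u,w$ to be irreducible --- a three-dimensional rank-two positive Hermitian JTS could a priori be the reducible $\Delta\times\mathbb B^2$; irreducibility does follow from $R(v,\bar v,u,\bar u)\neq0$ because $u\in\mathcal H_v$, but you must say so before invoking the low-dimensional classification. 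Supplying these three points would make the argument complete.
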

Let $\mu$, $\phi$ be roots of $v$, $u$ respectively. 
If $R_{u\bar  u u\bar u}=\frac{1}{2}R_{v\bar v v \bar v}$, then $\mu$, $\mu-\phi$, $\mu-2\phi$ are roots (see \cite[Lemma 3.6]{Mok_Tsai_1992}).

\begin{lemma}\label{ball}
\begin{enumerate}
\item
There are canonical totally geodesic isometric embeddings with respect to the Bergman and the Kobayashi metric
$$
\nu\colon \Omega^{II}_n,\, \Omega^{III}_n\hookrightarrow \Omega^I_{n,n} \quad \text{ and }\quad
\nu\colon \Omega^{IV}_{2k+1}\hookrightarrow \Omega^{IV}_{2k+2}
$$
such that for any minimal disc $\Delta\subset \Omega$, 
$$
(\nu(\Delta) \times \nu(\Delta)^\perp)\cap \nu(\Omega) = \nu(\Delta\times\Delta^\perp)
$$
for $\Omega= \Omega^{II}_n,\, \Omega^{III}_n$ or $\Omega^{IV}_{2k+1}$. 
\item
For $\Omega = \Omega^I_{p,q}$, $\Omega^{IV}_{2k+2}$,  $\Omega^V_{16}$ 
or $\Omega^{VI}_{27}$ and  a characteristic vector $v\in T_x\Omega$,
there exists a basis $\{e_1,\ldots, e_\ell\}$ of $\mathcal H_v$ 
such that $R(e_i,\bar e_i, e_i,\bar e_i)=2$ for any $i$. In particular $\Omega\cap \{ \mathbb C v + \mathbb C e_i\}\cong \mathbb B^2$.
\end{enumerate}
\end{lemma}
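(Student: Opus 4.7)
My plan is to handle both parts case by case, using the Harish-Chandra models and the restricted root data described in \cite{Mok89,Mok_Tsai_1992}.

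For part~(1), I would take $\nu$ to be the obvious inclusions: antisymmetric and symmetric matrices into $M(n,n;\mathbb{C})$, and $(z_1,\dots,z_{2k+1})\mapsto(z_1,\dots,z_{2k+1},0)$ in the type-IV case. Each realizes an embedding of Hermitian symmetric pairs, so is automatically Cartan-embedded, hence totally geodesic and isometric with respect to the Bergman metric. For the Kobayashi metric I would invoke the Polydisc Theorem: the Kobayashi norm at the origin of $v=\sum v_i\in T_0\Delta^r$ equals $\max_i|v_i|$, and every totally geodesic polydisc of $\nu(\Omega)$ extends to a totally geodesic polydisc of the ambient domain with matching spectral decomposition, so the ambient Kobayashi norm restricts to the intrinsic one. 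The intersection identity then reduces to a short Harish-Chandra coordinate check: for a normalized minimal disc $\Delta\subset\Omega$ (for instance $z(E_{12}-E_{21})$ in $\Omega^{II}_n$, $zE_{11}$ in $\Omega^{III}_n$, or $\tfrac{z}{\sqrt2}(1,i,0,\dots,0)$ in $\Omega^{IV}_{2k+1}$), the ambient $\nu(\Delta)^\perp$ is the complementary coordinate block from Table~\ref{characteristic subdomains}, and intersecting with $\nu(\Omega)$ returns $\nu(\Delta^\perp)$.

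For part~(2), by Lemma~\ref{2-ball} each unit root vector $u\in\mathcal{H}_v$ either spans with $v$ a totally geodesic $\mathbb{B}^2$ (when $R_{u\bar u u\bar u}=R_{v\bar v v\bar v}$) or sits inside a totally geodesic $\Omega^{IV}_3\subset\Omega$ through $v$ (when the ratio is $\tfrac12$, equivalently when $\mu-2\phi$ is a restricted root, where $\mu,\phi$ are the roots of $v,u$). The task is to exhibit a root basis of $\mathcal{H}_v$ in which the first case always holds. For $\Omega^I_{p,q}$ take $v=E_{11}$: the basis $\{E_{1j},E_{i1}\}_{i,j\ge 2}$ of $\mathcal{H}_v$ works, since for each $e_i$ the defining inequality $I-\overline Z^{\,t}Z>0$ on $\mathbb{C}v+\mathbb{C}e_i$ reduces to $|\alpha|^2+|\beta|^2<1$, giving $\mathbb{B}^2$. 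For $\Omega^{IV}_{2k+2}$ the analogous calculation at $v=\tfrac1{\sqrt2}(1,i,0,\dots,0)$ produces the basis directly from the standard quadric inequalities. For $\Omega^V_{16}$ and $\Omega^{VI}_{27}$ I would read off such a basis from the octonionic / Jordan-algebraic model of \cite{Roos_2008}.

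The main obstacle I anticipate is the verification of part~(2) for the exceptional domains $\Omega^V_{16}$ and $\Omega^{VI}_{27}$, where the octonionic inequalities do not lend themselves to easy matrix computation. A cleaner detour there is to argue via the characteristic subdomains from Table~\ref{characteristic subdomains}: $\mathbb{B}^5\subset\Omega^V_{16}$ and $\Omega^{IV}_8\subset\Omega^{VI}_{27}$ already satisfy a basis-of-$\mathbb{B}^2$'s condition in their own tangent spaces, and combining these bases with the polydisc decomposition should reconstruct the required basis of $\mathcal{H}_v$ without invoking the exceptional root combinatorics explicitly.
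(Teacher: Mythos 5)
Your part (1) and your treatment of $\Omega^I_{p,q}$ and $\Omega^{IV}_{2k+2}$ in part (2) follow essentially the paper's route: the trivial/coordinate embeddings for (1), and for (2) the reduction via Lemma~\ref{2-ball} to showing that every root vector $u\in\mathcal H_v$ satisfies $R_{u\bar u u\bar u}=R_{v\bar v v\bar v}$, verified by explicit computation in the classical models (the paper works with root vectors of $\frak{su}(p,q)$ and $\frak{so}(2k,2)$ rather than with the defining inequalities, but the content is the same).

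The genuine gap is your proposed detour for the exceptional domains. The characteristic subdomain $\Delta^\perp$ (the $\mathbb B^5$ in $\Omega^V_{16}$, the type-IV domain in $\Omega^{VI}_{27}$) has tangent space $T_0\Delta^\perp=\mathcal N_v$, which is the \emph{zero} eigenspace of $R_v$ and is complementary to $\mathcal H_v$ inside $T_0\Omega$. No information about $\mathbb B^2$'s sitting inside $\Delta^\perp$, even combined with the polydisc decomposition, can produce a basis of $\mathcal H_v$: the directions you must control lie precisely outside $\mathbb C v+\mathcal N_v$. (Concretely, for $\Omega^V_{16}$ one has $\dim\mathcal N_v=5$ and $\dim\mathcal H_v=10$; those ten directions are invisible to the characteristic subdomain.) Note also that part (2) is genuinely type-dependent --- it fails for $\Omega^{III}_n$ and for odd type IV, which is exactly why part (1) is needed --- so some case-specific input for the exceptional types is unavoidable. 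The paper handles $\Omega^V_{16}$ and $\Omega^{VI}_{27}$ by listing the noncompact positive roots of $\frak e_7$ (and the induced data for $\frak e_6$) from Drucker's tables, identifying the roots $\phi$ spanning $\mathcal H_v$, and checking that $\mu-2\phi$ is never a root, which by the remark following Lemma~\ref{2-ball} rules out the $\Omega^{IV}_3$ alternative. You would need to carry out that root-combinatorial verification (or an equivalent computation in the Jordan-algebraic model of \cite{Roos_2008}); the shortcut as you describe it does not close the argument.
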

\begin{proof}
{\bf(1):} Let $\nu$ be the trivial embedding for $
\nu\colon \Omega^{II}_n,\, \Omega^{III}_n\hookrightarrow \Omega^I_{n,n} $ and let $\nu(z_1,\ldots, z_{2k+1} ) = (z_1,\ldots, z_{2k+1}, 0)$ for $\nu\colon \Omega^{IV}_{2k+1}\hookrightarrow \Omega^{IV}_{2k+2}$.

{\bf (2):} By Lemma \ref{2-ball} we only need to show that $R(\xi, \bar \xi, \xi,\bar\xi)=R(v, \bar v, v, \bar v)$ for any root vector $\xi\in \mathcal H_v$.
By the curvature formula for root vectors $e_\mu$ and $ e_\varphi$,
$$
R(e_\mu, \bar e_\mu, e_\varphi, \bar  e_\varphi) = c([e_\mu, e_{-\mu}]; [  e_\varphi,  e_{-\varphi}])
$$
where $(\cdot, \cdot)$ denotes $-B(\cdot, \bar \cdot)$ for the Killing form $B$ of $\mathfrak g^{\mathbb C}$ and a positive constant $c$.

\medskip

For $\Omega = \Omega^I_{p,q}$, the identity component of the automorphism group of $\Omega$ is $SU(p,q)$ and its Lie algebra is $\frak{su}(p,q)$. For a Cartan subalgebra $\frak h:= \text{diag}[a_{11}, \ldots, a_{p+q, p+q}]$, let $\{ \pm (\epsilon_i - \epsilon_j) : 1\leq i<j\leq p+q\}$ be the set of roots of $\frak{su}(p,q)\otimes{\mathbb C} = \frak{sl}(p+q, \mathbb C)$
where $\epsilon_j(h) = a_{jj}$ for $h=(a_{11},\ldots, a_{p+q, p+q})$.  
Let $a_{ij}$ denote the 
root vector of $\epsilon_i - \epsilon_j$. 
Then $a_{ij}$ with $1\leq i \leq p < j\leq p+q$ consist of a basis of the holomorphic tangent space of $\Omega$ at $0$. 
For a characteristic vector $v= a_{1 p+1}$, 
we have $\mathcal H_v= \text{span} \{a_{1j} : p+2\leq j\leq p+q\} \cup \{a_{i p+1} : 2\leq i\leq p\}$ and $\mathcal N_v = \text{span}\{ a_{ij} : 2\leq i\leq p, p+2\leq j\leq p+q\}$.
Hence $R(a_{ij}, \bar a_{ij}, a_{ij}, \bar a_{ij})=R(v, \bar v, v, \bar v)$ for any $i,j$.

\medskip

For $\Omega = \Omega^{IV}_{2k+2}$, the identity component of the automorphism group of $\Omega$ is $G = SO(2k,2)$ and its Lie algebra is $\frak g = \frak{so}(2k,2)$. For a Cartan subalgebra $\frak h:= \text{diag}(a_{11}, \ldots,\\ a_{k+1, k+1}, -a_{11},\ldots, -a_{k+1, k+1})$, let $\{ \pm \epsilon_i \pm \epsilon_j : 1\leq i<j\leq k+1\}$ be the set of roots of $\frak g^{\mathbb C} = \frak{so}(2k+2, \mathbb C)$.
Then the root vectors of 
$\{ \epsilon_i \pm \epsilon_{k+1} :  1\leq i \leq k \}$ consist of a basis of the holomorphic tangent space of $\Omega$ at $0$.
For a characteristic vector $v$ which is a root vector of $\epsilon_k - \epsilon_{k+1}$, $\mathcal H_v$ is the span of root vectors corresponding to $\{\epsilon_i-\epsilon_{k+1} :1\leq i \leq k-1\} $ and $\mathcal N_v$ is the span of root vector of $\epsilon_k + \epsilon_{k+1}$.
Hence $R(\xi, \bar \xi, \xi,\bar\xi)=R(v, \bar v, v, \bar v)$ for any root vector $\xi$ corresponding to $\epsilon_i-\epsilon_{k+1}$ with $1\leq i\leq k-1$.

\medskip

For $\Omega = \Omega^{VI}_{27}$, the identity component of the automorphism group of it is the exceptional simple Lie group $G = E_7$ and its Lie algebra is $\frak g = \frak e_7$. 
The noncompact positive roots of $\frak e_7$ is listed on \cite[page 150]{Drucker_1978}: 
\begin{equation}\nonumber
\begin{array}{c}
\lambda_t , \,
\frac{1}{2}(\lambda_2 + \lambda_3) + 2\varepsilon \rho_s, \, \\
\frac{1}{2}(\lambda_1 + \lambda_3) + \varepsilon (-\rho_0 + \rho_1 + \rho_2 +\rho_3),
\frac{1}{2}(\lambda_1 + \lambda_3) + \varepsilon (-\rho_0 + \rho_i - \rho_j -\rho_k), \,\\
\frac{1}{2}(\lambda_1 + \lambda_2) - \varepsilon (\rho_0 + \rho_1 + \rho_2 +\rho_3),\,
\frac{1}{2}(\lambda_1 + \lambda_2) + \varepsilon (\rho_0 + \rho_i - \rho_j -\rho_k),
\end{array}
\end{equation}
with $1\leq t\leq 3$, $\varepsilon = \pm 1$, $0\leq s\leq 3$ and $(i,j,k)\in \{ (1,2,3), ( 2,3,1), (3,1,2)\}$. For a characteristic vector $v$ which is a root vector of $\lambda_3$, 
$\mathcal H_v$ is the span of root vectors corresponding to 
{\small 
\begin{equation}\label{H7}
 \frac{1}{2}(\lambda_2 + \lambda_3) + 2\varepsilon \rho_s, \, 
\frac{1}{2}(\lambda_1 + \lambda_3) + \varepsilon (-\rho_0 + \rho_1 + \rho_2 +\rho_3), \,
\frac{1}{2}(\lambda_1 + \lambda_3) + \varepsilon (-\rho_0 + \rho_i - \rho_j -\rho_k) 
\end{equation}}
with $\varepsilon = \pm 1$, $0\leq s\leq 3$ and $(i,j,k)\in \{ (1,2,3), ( 2,3,1), (3,1,2)\}$. 
Then it is not difficult to check that $\lambda_3 - 2\phi$ is not a root (see \cite[152--154]{Drucker_1978}) for any $\phi$ which is one of \eqref{H7}.

\medskip

For $\Omega = \Omega^V_{16}$, $G = E_6$ which is an exceptional simple Lie group and its Lie algebra is $\frak g = \frak e_6$. There exists an injective Lie algebra homomorphism from $\frak e_6$ into $\frak e_7$ such that compact roots of $\frak e_7$, when restricted to the Cartan subalgebra of $\frak e_6$, are distinct roots of $\frak e_6$. The compact positive roots of $\frak e_7$ corresponding to noncompact positive roots of $\frak e_6$ are given by 
{\small 
\begin{equation}\nonumber
\frac{1}{2}(\lambda_2-\lambda_3) + 2 \varepsilon \rho_s, \,
\frac{1}{2}(\lambda_1-\lambda_3) + \varepsilon ( -\rho_1 + \rho_1 + \rho_2 + \rho_3),\,
\frac{1}{2}(\lambda_1-\lambda_3) + \varepsilon ( -\rho_1 + \rho_i - \rho_j - \rho_k),
\end{equation}} 
with $\varepsilon = \pm 1$, $0\leq s\leq 3$ and $(i,j,k)\in \{ (1,2,3), ( 2,3,1), (3,1,2)\}$. For a characteristic vector $v$ which is a root vector of $\frac{1}{2}(\lambda_2-\lambda_3) - 2\rho_0$, $\mathcal H_v$ is the span of root vectors corresponding to 
{\small 
\begin{equation}\label{H6}
\frac{1}{2}(\lambda_2-\lambda_3) + 2 \varepsilon \rho_s, \,
\frac{1}{2}(\lambda_1-\lambda_3) -\rho_1 + \rho_1 + \rho_2 + \rho_3,\,
\frac{1}{2}(\lambda_1-\lambda_3)  -\rho_1 + \rho_i - \rho_j - \rho_k,
\end{equation}}
with $1\leq s\leq 3$ and $(i,j,k)\in \{ (1,2,3), ( 2,3,1), (3,1,2)\}$. 
Hence $\frac{1}{2}(\lambda_2-\lambda_3) - 2\rho_0 - 2\phi$ is not a root (see \cite[152--154]{Drucker_1978}) for any $\phi$ which is one of \eqref{H6}.
\end{proof}

For each $x\in X$ define 
$$
\mathcal V_x := \bigcup \{\ell : \ell \text{ is a minimal rational curve on } X \text{ through } x \},
$$
and $V_x = \mathcal V_x \cap \Omega\subset \Omega$.
Let $\delta\in H^2(X,\mathbb Z)\cong \mathbb Z$ be the positive generator of the second integral cohomology group of $X$. Write 
$c_1(X) = (p+2)\delta$. 
Let $q \in\partial\Omega$ which sits on the boundary of a minimal disc.
Note that $V_q$ is the union of the minimal discs whose boundaries contain the point $q$.
In \cite{Mok_2016}, it is proved that 
$(V_q, b_\Omega|_{V_q})$ is the image of a holomorphic isometric embedding $F\colon (\mathbb B^{p+1}, b_{\mathbb B^{p+1}})
\rightarrow (\Omega,b_\Omega)$, where $\mathbb B^{p+1}:= \{ z\in \mathbb C^{p+1}: |z|<1\}$ is 
the $p+1$ dimensional unit ball.

\subsection{The Kobayashi metric on bounded symmetric domains}
We recall a few basic facts concerning the Kobayashi pseudodistance and complex/real geodesics. For more details, see \cite{Royden_1971, Kobayashi}.
Let $\Delta := \{ z\in \mathbb C : |z|<1\}$ denote the unit disc. 
Let $M$ be a complex manifold and $TM$ be its holomorphic tangent bundle. The infinitesimal Kobayashi-Royden pseudometric $k_M\colon TM\rightarrow \mathbb R$ on $M$ is a Finsler pseudometric defined by 
\begin{equation}\nonumber
k_M(z;v) = \inf \left\{ |\zeta| 
: \exists f\in \text{Hol}(\Delta, M),\, f(0)=z, \, df(0) = v/{\zeta}\right\},
\end{equation}
and for $x,\,y\in M$ the Kobayashi pseudodistance $d_M^K$ is defined by 
$$
d_M^K(x,y) = \inf \left\{ \int_0^1 k_M( \gamma(t), \gamma'(t) ) dt 
: \gamma(0) = x, \gamma(1) = y\right\}.
$$
Remark that $k_M$ and $d_M^K$ enjoy the distance decreasing property with respect to holomorphic mappings, i.e. for complex manifolds $M,\, N$ and a holomorphic map $f\colon M\rightarrow N$, we have 
$$
k_N(f(z); df(v))\leq k_M(z;v),\quad d_N^K(f(x), f(y)) \leq d_M^K(x,y)
$$
for any $(z;v)\in TM$ and $x,y\in M$.
For a complex manifold $M$, $k_M$ is upper semicontinuous. 
If $M$ is a taut manifold, i.e. $\text{Hol}(\Delta, M)$ is a normal family, then $k_M$ is continuous on $TM$. Therefore $k_\Omega$ is continuous for any bounded symmetric domain $\Omega$.
We say that $M$ is (Kobayashi) hyperbolic if and only if $k_M(z;v)>0$ whenever $v\neq 0$.
It is known that any bounded domain in $\mathbb C^n$ is hyperbolic.
\begin{example}
\begin{enumerate}
\item 
If $M$ is the unit ball $\mathbb B^n=\{ z\in \mathbb C^n : |z|<1\}$, then $k_{\mathbb B^n}$ coincides with the Bergman metric $b_{\mathbb B^n}$. 
\item
If $M$ is the polydisc $\Delta^r$, then we have 
$$k_{\Delta^r}(p;v) = \max_{1\leq j\leq r}\{b_\Delta(p_j; v_j)\}
\quad\text{ and } \quad
d_{\Delta^r}^K(x,y) = \max_{1\leq j\leq r}\{d^K_\Delta(x_j, y_j)\} $$
where $p=(p_1, \ldots, p_r)$, $v=(v_1, \ldots, v_r)$, $x=(x_1,\ldots, x_r)$ and $y=(y_1,\ldots, y_r)$.
\end{enumerate}
\end{example}

\begin{definition}
Let $M,\,N$ be Kobayashi hyperbolic manifolds.
\begin{enumerate}
\item
A map $f\colon M\rightarrow N$ is a {\it totally geodesic embedding} if $f$ is an isometry for the Kobayashi distance, i.e.
$$
d^K_M(x,y) = d^K_N(f(x), f(y))
$$
for any two points $x,y\in M$.
\item
A {\it (real) geodesic} in $M$ is a $C^1$ locally regular curve $\gamma\colon I\rightarrow M$ such that 
\begin{equation}\nonumber
 \int_{t_1}^{t_2} k_M(\gamma(t); \gamma'(t)) dt = d^K_M ( \gamma(t_1),\gamma( t_2))
\end{equation}
for all $t_1, t_2\in I$, where $I\subset \mathbb R$ is an interval.
\item 
A holomorphic map $\varphi\colon\Delta\rightarrow M$ is a {\it complex geodesic} if $\varphi$ is an isometry for the Kobayashi distances on $\Delta$ and $M$.
\end{enumerate}
\end{definition}

Suppose that $M$ is a convex domain in $\mathbb C^n$.
Every complex geodesic $\varphi\colon \Delta\rightarrow M$ gives rise to a unit speed geodesic $\gamma\colon \mathbb R\rightarrow M$ by 
$\gamma(t) = \varphi(\tanh (t))$ for any $t\in \mathbb R$.

Note that every complex geodesic is a proper injective map from $\Delta$ to $M$. 
If $M$ is a strongly convex domain in $\mathbb C^n$, then any pair of points of $M$ is contained in a unique complex geodesic and it is continuously extended to the boundary (\cite{Lempert_1981}). On the other hand if $M$ is weakly convex, we know that for any two points in $M$ there exists a complex geodesic joining them but there could be many others.
For example if $M$ is a polydisc $\Delta^k$, then a holomorphic map $\varphi = (\varphi_1, \ldots, \varphi_k)\colon \Delta\rightarrow\Delta^k$ is a complex geodesic if and only if $\varphi_j$ is an automorphism of $\Delta$ for some $j$. In particular, complex geodesic does not need to be extended continuously to the boundary.

\begin{lemma}\label{max}
Let $\Omega$ be a bounded symmetric domain. For $v\in T_p \Omega$, let $\Delta^r$ be a totally geodesic Poincar\'{e} polydisc passing through $p$ such that $v\in T_p \Delta^r$ with respect to the Bergman metric.
Then
\begin{equation}\nonumber
k_\Omega(p;v) = \max_{1\leq j\leq r} k_\Delta(p_j;v_j),
\end{equation}
where $p_j$ and $v_j$ are the $j$-th components of $p\in \Delta^r$ and $v\in T_p\Delta^r\cong \mathbb{ C}^r$, respectively.
In particular, if $[v]\in \mathcal C_p$, then
$$ k_\Omega(p;v)=b_\Omega(p;v)$$
and $\Delta^r$ is totally geodesic with respect to the Kobayashi metric.
\end{lemma}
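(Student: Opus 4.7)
The plan is to prove the main equality $k_\Omega(p;v) = \max_{1\leq j\leq r} k_\Delta(p_j;v_j)$ by two matching inequalities, then read off the rank-one consequences. The easy direction is immediate: the inclusion $\iota\colon \Delta^r \hookrightarrow \Omega$ is holomorphic, so by distance decreasing together with the product formula $k_{\Delta^r}(p;v) = \max_j k_\Delta(p_j;v_j)$ recalled in the example, $k_\Omega(p;v) \leq \max_j k_\Delta(p_j;v_j)$. For the reverse inequality I first reduce to $p = 0$ and $\Delta^r$ being the standard polydisc $\Delta^r_{\mathrm{std}}$ in the Harish-Chandra realization. By homogeneity one chooses $\phi_1 \in \text{Aut}(\Omega)$ with $\phi_1(p) = 0$; the Polydisc Theorem implies the isotropy group $K$ acts transitively on totally geodesic polydiscs through $0$, so composing with a suitable $\phi_2 \in K$ produces a biholomorphism $\phi = \phi_2 \circ \phi_1$ sending $(p, \Delta^r)$ to $(0, \Delta^r_{\mathrm{std}})$. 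Since $\phi$ preserves $k_\Omega$ and restricts to a biholomorphism $\Delta^r \to \Delta^r_{\mathrm{std}}$ preserving the polydisc Kobayashi metric, the reverse inequality at $(p,v)$ reduces to $k_\Omega(0;w) \geq \max_j |w_j|$ for $w \in T_0\Delta^r_{\mathrm{std}}$.

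At the origin, $\Omega$ is a bounded convex circular domain, so a classical fact identifies its infinitesimal Kobayashi metric at $0$ with the Minkowski functional $\mu_\Omega(w) = \inf\{t > 0 : w/t \in \Omega\}$. The key structural ingredient I rely on is that $\Delta^r_{\mathrm{std}}$ appears as a linear slice of $\Omega$: there is an $r$-dimensional subspace $V \subset \mathbb{C}^N$, spanned by root vectors for a maximal set of strongly orthogonal noncompact roots, with $\Delta^r_{\mathrm{std}} = \Omega \cap V$. For $w \in V$ the condition $w/t \in \Omega$ then collapses to $w/t \in \Delta^r_{\mathrm{std}}$, so $\mu_\Omega(w) = \mu_{\Delta^r_{\mathrm{std}}}(w) = \max_j |w_j|$, which closes the argument.

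For the \emph{in particular} part, when $[v] \in \mathcal{C}_p$ one can arrange the polydisc so that $v$ lies along the first factor, $v = (v_1, 0, \ldots, 0)$. The main equality then reduces to $k_\Omega(p;v) = k_\Delta(p_1;v_1)$, while total geodesy of $\Delta^r$ for the Bergman metric (Polydisc Theorem) together with the coincidence of the Bergman and Kobayashi metrics on $\Delta$ gives $b_\Omega(p;v) = b_\Delta(p_1;v_1) = k_\Delta(p_1;v_1)$, hence $k_\Omega(p;v) = b_\Omega(p;v)$. The Kobayashi total geodesy of $\Delta^r$ follows from the infinitesimal equality, which already gives $d^K_\Omega \leq d^K_{\Delta^r}$ on $\Delta^r$, together with a linear holomorphic retraction $\Omega \to \Delta^r$ (projection along $V^\perp$ in Harish-Chandra coordinates) whose distance-decreasing property prevents shortcuts through $\Omega \setminus \Delta^r$. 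The main obstacle in executing this plan is to pin down the linear-slice description of $\Delta^r$ and the corresponding holomorphic retraction, both of which rest on the detailed root-theoretic content underlying the Polydisc Theorem.
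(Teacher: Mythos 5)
Your argument is correct, but for the key lower bound it takes a genuinely different route from the paper. The paper works directly at the point $p$: since each component $v_j$ is a rank-one vector, there is a minimal disc through $p$ tangent to $v_j$ and a holomorphic projection of $\Omega$ onto it, so distance decreasing gives $k_\Omega(p;v)\geq k_\Delta(p_j;v_j)$ for every $j$, and the easy inequality $k_\Omega\leq k_{\Delta^r}$ closes the argument. You instead normalize to the origin of the Harish--Chandra realization and invoke the classical identity $k_\Omega(0;\cdot)=\mu_\Omega$ for convex circled domains together with the linear-slice description $\Delta^r_{\mathrm{std}}=\Omega\cap V$. Both routes lean on unproved structural facts of comparable weight: the paper on the existence of the holomorphic projections onto minimal discs, you on the slice property, the transitivity of $K$ on polydiscs through $0$ (which is true and standard, though it is not literally what the stated Polydisc Theorem says), and the linear retraction $\Omega\to\Delta^r$. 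What your approach buys is a cleaner treatment of the final claim: the paper's proof stops after the displayed equality and never addresses why $\Delta^r$ is totally geodesic for the Kobayashi \emph{distance}, whereas you correctly observe that the infinitesimal equality alone does not rule out shortcuts through $\Omega\setminus\Delta^r$ and supply the retraction to do so. What the paper's approach buys is that it is pointwise and needs no reduction to the origin or to a standard polydisc. A small economy available to you: it suffices to treat maximal polydiscs, since any totally geodesic Poincar\'e polydisc extends to a maximal one and the extra components of $v$ are zero, which sidesteps the transitivity question for non-maximal polydiscs.
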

\begin{proof}
By the distance decreasing property of Kobayashi metrics with respect to holomorphic mappings, we have 
$$\max_{1\leq j\leq r} k_\Delta(p_j;v_j)=k_{\Delta^r}(p;v)\geq k_\Omega(p;v).$$
Since each $v_j$, $j=1,\ldots, r$ has rank 1, there exists a minimal disc passing through $p$ tangential to $v_j$ and hence there exists a projection $p_j$ to the minimal disc.
This implies that $k_\Omega(p;v)\geq k_\Delta(p_j;v_j)$ for each $j$. 
\end{proof}

\section{Real and complex geodesics in bounded symmetric domains}

\begin{lemma}\label{polyD}
For a unit speed real geodesic $\gamma\colon I \rightarrow \Omega$, there exists a totally geodesic polydisc $\Delta^k$ of dimension $k$ such that 
$$ \gamma(I)\subset \Delta^k\times(\Delta^k)^\perp$$
where $ \gamma_1\colon I \rightarrow \Delta^k$ is a geodesic such that each component is a unit speed geodesic in a disc and $\gamma_2\colon I\to (\Delta^k)^\perp $ is not a geodesic. Here $ \gamma_1$ 
and 
$\gamma_2$ are the $\Delta^k$ and $(\Delta^k)^\perp$-components of $\gamma$, respectively. 
Moreover, there exists a unique totally geodesic holomorphic disc $\Delta\subset \Delta^k$ that contains $\gamma_1(I)$.
\end{lemma}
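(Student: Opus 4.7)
The plan is to localize $\gamma$ at a single point, apply Theorem~\ref{polydisc theorem} to decompose $\gamma'(t_0)$ into rank-one directions, identify the ``saturated'' sub-polydisc via Lemma~\ref{max}, and then pin the geodesic down globally by means of distance-decreasing holomorphic retractions onto minimal discs.

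First I would fix $t_0 \in I$ and, using homogeneity of $\Omega$, assume $\gamma(t_0) = 0$. Set $v := \gamma'(t_0) \in T_0\Omega$ and invoke Theorem~\ref{polydisc theorem} to choose a totally geodesic Poincar\'{e} polydisc $\Delta^r \subset \Omega$ through $0$ with $v \in T_0\Delta^r$. Writing $v = (v_1,\dots,v_r)$ in polydisc coordinates, Lemma~\ref{max} gives
\[
1 \;=\; k_\Omega(0;v) \;=\; \max_{1\le j\le r}|v_j|,
\]
so after permuting factors I set $J := \{j:|v_j|=1\} = \{1,\dots,k\}$. Let $\Delta^k \subset \Delta^r$ denote the sub-polydisc spanned by the first $k$ factors; by Table~\ref{characteristic subdomains} there is a characteristic subdomain $(\Delta^k)^\perp$ of rank $r-k$ such that $\Delta^k \times (\Delta^k)^\perp$ embeds totally geodesically in $\Omega$ and contains $\Delta^r$.

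The central step is to establish $\gamma(I) \subset \Delta^k \times (\Delta^k)^\perp$. For each $j \in J$ let $D_j \subset \Delta^k$ be the $j$-th minimal disc through $0$, and construct a holomorphic retraction $\pi_j \colon \Omega \to D_j \cong \Delta$ (for $\Omega^I_{p,q}$ this is the map $Z \mapsto u_j^* Z w_j$ attached to a rank-one datum generating $D_j$, with analogous retractions built from the Harish-Chandra root data of Lemma~\ref{ball} in the other classical and exceptional types). Distance-decreasing of $\pi_j$ and the geodesic identity $d^K_\Omega(\gamma(s),\gamma(t)) = |t-s|$ yield $d^K_\Delta(\pi_j\gamma(s),\pi_j\gamma(t)) \le |t-s|$, while $(\pi_j \circ \gamma)'(t_0) = v_j$ has unit Kobayashi norm in $\Delta$; together these force $\pi_j \circ \gamma$ to be the unit-speed Poincar\'{e} disc geodesic $t \mapsto v_j\tanh(t-t_0)$. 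Assembling the $\pi_j$ with a companion retraction $\Omega \to (\Delta^k)^\perp$ yields a holomorphic retraction $\Omega \to \Delta^k\times(\Delta^k)^\perp$ which, applied to $\gamma$, produces the splitting $\gamma = (\gamma_1,\gamma_2)$ with $\gamma_1(t) = (v_1\tanh(t-t_0),\dots,v_k\tanh(t-t_0))$.

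Each component of $\gamma_1$ is by construction a unit-speed Poincar\'{e} disc geodesic, and the maximality of $k$ (any extra saturated direction would enlarge $J$) prevents $\gamma_2$ from being a unit-speed Kobayashi geodesic in $(\Delta^k)^\perp$. The unique totally geodesic holomorphic disc in $\Delta^k$ containing $\gamma_1(I)$ is then the diagonal $\{(v_1z,\dots,v_kz):z\in\Delta\}$, determined by the complex ratios $v_1{:}\cdots{:}v_k$ read off from $\gamma'(t_0)$. The main technical obstacle I foresee is the uniform construction of the retractions $\pi_j$ across all six irreducible types and the verification that their product genuinely retracts $\Omega$ onto $\Delta^k\times(\Delta^k)^\perp$; this is where the root data of Lemma~\ref{ball} and the explicit characteristic subdomains in Table~\ref{characteristic subdomains} will carry the weight.
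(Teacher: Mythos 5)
Your argument has two genuine gaps, and the second one is fatal to the whole approach. First, the inference ``$\pi_j\circ\gamma$ is $1$-Lipschitz for $d^K_\Delta$ and has unit Kobayashi speed at the single time $t_0$, hence it is the unit-speed geodesic $t\mapsto v_j\tanh(t-t_0)$'' is false: the curve $t\mapsto v_j\tanh(\sin(t-t_0))$ is $1$-Lipschitz for the Poincar\'e distance and has unit speed at $t_0$, yet is not a geodesic. To force $\pi_j\circ\gamma$ to be a geodesic you would need the equality $d^K_\Delta(\pi_j\gamma(s),\pi_j\gamma(t))=|t-s|$ for \emph{all} $s,t$, i.e.\ that the $j$-th component realizes the maximum in Lemma~\ref{max} at every pair of times --- which is essentially part of what is to be proved, and which the paper only obtains locally, for a continuously varying family of minimal discs $\Delta_t$ chosen using the $C^1$ hypothesis, before iterating.

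Second, and more seriously: even if every $\pi_j\circ\gamma$ were a geodesic, composing $\gamma$ with a holomorphic retraction $\rho\colon\Omega\to\Delta^k\times(\Delta^k)^\perp$ only gives information about $\rho\circ\gamma$; it says nothing about the components of $\gamma$ transverse to $\Delta^k\times(\Delta^k)^\perp$, so the containment $\gamma(I)\subset\Delta^k\times(\Delta^k)^\perp$ --- the actual content of the lemma --- is never established. In the decomposition $T_0\Omega=\mathbb Cw+\mathcal H_w+\mathcal N_w$ attached to a rank-one direction $w$, your retractions simply kill the $\mathcal H_w$-directions rather than proving that $\gamma'$ has no $\mathcal H_w$-component. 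This is exactly where the paper's proof does its real work: it places $w$ and the $\mathcal H_{[w]}$-directions inside a maximal rank-one (ball) subdomain $L$ via Lemma~\ref{ball}, where the Kobayashi and Bergman norms coincide, so that $k_L(0,(v_0,v_H))=\sqrt{|v_0|^2+|v_H|^2}$; the distance-decreasing property then forces $|v_0|\geq\sqrt{|v_0|^2+|v_H|^2}$, hence $v_H=0$. Without a strict-convexity input of this kind that penalizes a nonzero $\mathcal H$-component, the lemma does not follow. A further, smaller, issue is that you fix the polydisc from the data at the single time $t_0$, whereas a priori the set $J$ of saturated directions can vary with $t$; the paper handles this by a local argument followed by iteration and induction on the $(\Delta_t)^\perp$-component.
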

\begin{proof}
Assume $\gamma(0)=0.$ Choose the unique minimal totally geodesic polydisc $\Delta^m$ such that $\gamma'(0)\in T_{0}\Delta^m$.
Let
$$
\gamma'(0)=(v_1,\ldots,v_m)\in T_0 \Delta^m \text{ with } 1=|v_1|\geq \cdots\geq |v_m|>0.$$
First we will show that
$$\gamma(I)\subset \Delta_0\times (\Delta_0)^\perp$$
for some minimal disc $\Delta_0\subset \Delta^m$ passing through $0$.

For each $t\in I$, by Polydisc Theorem (Theorem \ref{polydisc theorem}), we can choose the minimal totally geodesic polydisc $P_t$ such that 
$$0, \gamma(t)\in P_t.$$  
Since $\gamma$ is $C^1$, we can choose a sufficiently small $t_0>0$ and a continuous family $\{\Delta_t,\,0<t<t_0\}$ of minimal discs passing through $0$ such that 
$$P_t\subset \Delta_t\times\Delta_t^\perp$$ and 
\begin{equation}\label{k-dist-2}
t=d_\Omega^K(0, \gamma(t))=d_{\Delta}^K(0, \pi_t\circ \gamma(t)),
\end{equation}
where $\pi_t:\Omega\to \Delta_t$ is the projection to $\Delta_t$. Since $\Delta_t$ is a minimal disc, we can choose a continuous family of complex geodesic $\Gamma_t:\Delta\to \Delta_t\subset \Omega$, $0<t< t_0$ such that
\begin{equation*}
\Gamma_t(0)=0,\quad  \Gamma_t(\tanh(t))=\pi_t\circ \gamma(t).
\end{equation*}

Fix $t\in (0,\,t_0)$ and
write
$$ \Gamma_t'(0)=w_t\in T_0\Delta_t.$$
Since $\Gamma_t(\cdot)$ is a complex geodesic, we obtain 
$$k_\Omega(0, \Gamma_t'(0))=1,$$
i.e. $|w_t|=1.$ 
Choose a maximal rank one totally geodesic subdomain $L\subset\Omega$ through $0$ such that 
$w_t \in T_0L.$ 
Let
$$
v_L:=(\pi_L)_*\gamma'(0)=(v_0, v_H)\in \mathbb Cw_t\oplus {H}_{[w_t]},
$$
where $\pi_L$ is the  projection to $L$.
Since $ v_L\in \mathcal C_0(\Omega)$, we obtain
$$ k_\Omega(0,  v_L)=b_\Omega(0,  v_L)=\sqrt{|v_0|^2+|v_H|^2}.$$
On the other hand by \eqref{k-dist-2}, we obtain
$$
k_\Omega(0,\gamma'(0))=k_\Delta(0, (\pi_{t}\circ\gamma)'(0))=|v_0|.
$$
By the distance decreasing property of Kobayashi metrics, we obtain
$$ |v_0|=k_\Omega(0, \gamma'(0)) \geq k_L(0, (\pi_L)_*\gamma'(0))=\sqrt{|v_0|^2+|v_H|^2}.$$
Therefore we obtain
$$v_H=0,\quad |v_0|= k_\Omega(0, v_0)=1.$$
Moreover, by Lemma \ref{ball} without loss of generality we may choose a totally geodesic rank one subspace $L$, such that $\pi_L\circ \gamma$ is the unique geodesic in $L$ with $\pi_L\circ\gamma(0)=0$, $(\pi_L\circ \gamma)'(0)=v_0.$ 
Since $v_H=0$, this implies 
$$\pi_L\circ \gamma([0,\,t])\subset \Delta_{t}.$$
Since $L$ is arbitrary, by Lemma \ref{ball} we obtain 
$$ \gamma([0,\,t])\in \Delta_{t}\oplus (\Delta_{t})^\perp,$$
implying that
$$ \gamma([0,\,t_0])\in \Delta_{t_0}\oplus (\Delta_{t_0})^\perp$$
and 
$(\Delta_{t_0})^\perp$-component $\gamma_0^\perp$ of $\gamma$ is a real geodesic in $\Delta_{t_0}^\perp$
if and only if 
$$k_\Omega(0,(\gamma_0^\perp)'(s))=1$$
for any $s\in [0,\,t_0]$.
Hence by induction argument, 
we obtain
$$\gamma([0,\,t_0])\subset  \Delta_0^k\times (\Delta_0^k)^\perp$$
for some $\Delta_0^k\subset \Delta^m$.
By iterating this process, we obtain
$$\gamma(I)\subset \Delta_0^k\times (\Delta_0^k)^\perp$$
for another possibly smaller dimensional polydisc $\Delta^k_0$.
The rest part of the lemma is clear by the argument above.
\end{proof}

\begin{corollary}\label{unique minimal disc}
Let $\Omega$ be a bounded symmetric domain and $\gamma\colon I\rightarrow \Omega$ be a real geodesic such that 
$\gamma(I)$ is an integral submanifold of $\mathcal{RC}(\Omega)$.
Then $\gamma(I)$ is contained in a unique minimal disc.
\end{corollary}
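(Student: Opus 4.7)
The plan is to apply Lemma \ref{polyD} to $\gamma$ (reparametrized to unit speed) and then use the rank-one hypothesis to force the polydisc decomposition to collapse to a single minimal disc. Lemma \ref{polyD} produces a totally geodesic polydisc $\Delta^k\subset\Omega$ with $\gamma(I)\subset\Delta^k\times(\Delta^k)^\perp$ and a splitting $\gamma=(\gamma_1,\gamma_2)$ in which each of the $k$ coordinate components of $\gamma_1$ is a unit speed Poincar\'e geodesic in $\Delta$, while $\gamma_2$ fails to be a geodesic.

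At each $t\in I$, I would pass to the complex tangent direction $w(t)=(w_1(t),w_2(t))\in T\Delta^k\oplus T(\Delta^k)^\perp$ associated with $\gamma'(t)$. Each of the $k$ components of $w_1(t)$ is nonzero by the unit speed condition, so $w_1(t)$ has rank exactly $k$ inside $\Delta^k$. Rank is additive along the canonical totally geodesic embedding $\Delta^k\times(\Delta^k)^\perp\hookrightarrow\Omega$ — this follows by combining Theorem \ref{polydisc theorem} with the Jordan triple spectral decomposition of tangent vectors — so $\text{rank}_\Omega w(t)=k+\text{rank}_{(\Delta^k)^\perp} w_2(t)$. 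The assumption $\gamma'(t)\in\mathcal{RC}(\Omega)$ forces $\text{rank}_\Omega w(t)=1$, hence $k=1$ and $w_2(t)=0$ at every $t\in I$. Therefore $\gamma_2$ is constant, say $\gamma_2\equiv c$, and $\gamma(I)\subset\Delta\times\{c\}$.

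To finish existence, I would observe that $\Delta\times\{c\}$ is itself a minimal disc: inside the canonical totally geodesic embedding $\Delta\times(\Delta)^\perp\hookrightarrow\Omega$ it is obtained from the reference minimal disc $\Delta\times\{0\}$ by an automorphism of $\Omega$ sending $0$ to $(0,c)$, and automorphisms of $\Omega$ preserve the class of minimal discs. For uniqueness, any two minimal discs containing $\gamma(I)$ share the point $\gamma(0)$ and the common tangent direction $[w(0)]\in\mathcal{C}_{\gamma(0)}$; since a minimal disc is determined by a point together with a rank-one tangent direction at that point (it is cut out of $\Omega$ by a minimal rational curve of $\widehat\Omega$ whose class is fixed by its tangent direction), the two discs must coincide.

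The step I expect to be the main obstacle is the rank-additivity assertion across $\Delta^k\times(\Delta^k)^\perp\hookrightarrow\Omega$: while the inequality $\text{rank}_\Omega w\leq \text{rank}_{\Delta^k}w_1+\text{rank}_{(\Delta^k)^\perp}w_2$ is immediate by combining a minimal polydisc through $w_1$ with one through $w_2$ inside the product, ruling out a smaller polydisc in $\Omega$ containing $w$ requires appealing to the spectral decomposition of tangent vectors in the Hermitian Jordan triple system description of $\Omega$, and this is where care has to be taken.
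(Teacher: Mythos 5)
Your proof is correct and follows essentially the same route as the paper's: both apply Lemma \ref{polyD} and then use the rank-one hypothesis at every $t$ to force the $(\Delta^k)^\perp$-component of $\gamma$ to be constant, the only difference being that the paper takes $k=1$ as input (choosing the minimal disc tangent to the rank-one vector $\gamma'(0)$) whereas you derive $k=1$ afterwards from rank additivity across $\Delta^k\times(\Delta^k)^\perp$. The rank-additivity step you flag as delicate is indeed the crux, but the paper relies on the same fact implicitly when it concludes $(\gamma_0^\perp)'\equiv 0$, so this is not a gap relative to the paper's own argument.
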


\begin{proof}
We may assume that $\gamma(0)=0\in \Omega$ and $|\gamma'(0)|=1$.
Since $\gamma'(0)\in \mathcal {RC}_0$, there exists a minimal disc $\Delta_0$ such that $\gamma'(0)\in T_0\Delta_0$. 
Then by Lemma \ref{polyD}, we obtain
\begin{equation}\nonumber
 \gamma(I)\subset \Delta_0\times\Delta_0^\perp
 \end{equation}
 and 
 \begin{equation}\nonumber
 k_\Omega(\gamma(t), \gamma'(t))=k_\Delta(\gamma_0(t), \gamma_0'(t))
\end{equation}
where we let
$\gamma=(\gamma_0, \gamma_0^\perp)\in \Delta_0\times \Delta_0^\perp$.
Since $\gamma'(t) \in \mathcal {RC}_{\gamma(t)}$ for any $t\in I$, we obtain
$$(\gamma^\perp_0)'\equiv 0,$$
which completes the proof.
\end{proof}

\begin{proposition}\label{poly-K}
Let $\Delta^m=\Delta_1\times\cdots\times\Delta_m$ be an $m$-dimensional polydisc and let 
$f:\Delta^m\to\Delta^m$ be a $C^1$ isometry with respect to the Kobayashi metric.
Then up to automorphisms of $\Delta^m$, $f$ is of the form
$$ f(\zeta_1,\ldots,\zeta_m)=(\Phi_1(\zeta_1),\ldots,\Phi_m(\zeta_m))$$
with each $\Phi_i$ being $\zeta_i\mapsto \zeta_i$ or $\zeta_i\mapsto \bar\zeta_i$. 
\end{proposition}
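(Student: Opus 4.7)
The plan is to analyze the differential $df_p$ at each point and show it has a rigid product structure that forces $f$ itself to split as a product of one-variable maps. After composing $f$ with the M\"obius translation in $\text{Aut}(\Delta^m)$ sending $f(0)$ back to the origin, I may assume $f(0)=0$. Since $f$ is a $C^1$ Kobayashi isometry, $df_p\colon T_p\Delta^m\to T_{f(p)}\Delta^m$ is an $\mathbb{R}$-linear isomorphism preserving the infinitesimal Kobayashi norm $k_{\Delta^m}(p;v)=\max_j|v_j|/(1-|p_j|^2)$, whose unit ball is a rescaled polydisc with $j$-th radius $1-|p_j|^2$. So the differential at each point is an $\mathbb{R}$-linear isometry between rescaled polydiscs.

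The crux of the argument is to classify the $\mathbb{R}$-linear isometries of a polydisc norm, showing that any such isometry $T$ permutes the complex coordinate lines $\mathbb{C}e_1,\ldots,\mathbb{C}e_m$. I would do this by examining the face structure of $\bar\Delta^m$: its codimension-one faces form $m$ continuous families $F_j^c=\{z_j=c,\, z_k\in\bar\Delta\text{ for }k\neq j\}$ indexed by $c\in\partial\Delta$. The map $T$ carries facets to facets, and continuity in $c$ together with connectedness of $\partial\Delta$ forces $T$ to send each family $\{F_k^c\}_c$ into a single family $\{F_{\sigma(k)}^{c'}\}_{c'}$ for some permutation $\sigma\in S_m$. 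Parallel facets within the $k$-th family differ by translations in the direction $\mathbb{C}e_k$, so one obtains $T(\mathbb{C}e_k)=\mathbb{C}e_{\sigma(k)}$, with $T|_{\mathbb{C}e_k}$ an $\mathbb{R}$-linear isometry of $(\mathbb{C},|\cdot|)$, i.e.\ an element of $O(2)=\{e^{i\theta}\cdot\}\cup\{e^{i\theta}\bar{\cdot}\}$. Applying this pointwise yields permutations $\sigma_p$ with $df_p(\mathbb{C}e_k)=\mathbb{C}e_{\sigma_p(k)}$; since $f$ is $C^1$ and $S_m$ is discrete, continuity on the connected domain $\Delta^m$ forces $\sigma_p=\sigma$ to be constant.

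Composing $f$ with the permutation automorphism $\sigma^{-1}\in\text{Aut}(\Delta^m)$, I may then assume $\sigma=\text{id}$, so $df_p$ preserves each $\mathbb{C}e_k$ at every $p$. This translates to $\partial f_j/\partial x_k=\partial f_j/\partial y_k=0$ for all $j\neq k$, meaning $f_j$ depends only on $\zeta_j$; write $f_j(\zeta)=g_j(\zeta_j)$. Restricting $f$ to each coordinate slice $\{0\}\times\cdots\times\Delta_j\times\cdots\times\{0\}$ shows $g_j\colon\Delta\to\Delta$ is itself a Kobayashi isometry fixing $0$, hence a rotation $\zeta\mapsto e^{i\theta_j}\zeta$ or its conjugated variant $\zeta\mapsto e^{i\theta_j}\bar\zeta$. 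A final composition with $\text{diag}(e^{-i\theta_1},\ldots,e^{-i\theta_m})\in\text{Aut}(\Delta^m)$ reduces each $g_j$ to either the identity or complex conjugation, yielding the claimed form. The main obstacle is the pointwise rigidity in the second paragraph: since $df_p$ is only $\mathbb{R}$-linear, one must rule out mixings like $(z_1,z_2)\mapsto(z_1,\bar z_2+\varepsilon z_1)$, and the face-theoretic classification above---essentially an elementary Banach--Stone-type statement for the $\ell^\infty$-sum of $m$ copies of $\mathbb{C}$---is where the real work lies; everything after that is routine once the coordinate-preserving structure of $df_p$ is established.
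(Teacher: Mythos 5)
Your argument is correct, but it takes a genuinely different route from the paper's. The paper proves this proposition globally: it inducts on $m$, follows complete real geodesics $t\mapsto(\gamma(t),y)$, invokes Lemma \ref{polyD} to force some component $f_j(\gamma(\cdot),y)$ to be a complete geodesic, and then uses finiteness of Kobayashi distances together with limit sets in the boundary components of $\Delta_1\times\Delta_1^\perp$ to show, sector by sector, that $f_1$ depends only on $\zeta_1$ and that the remaining block is an isometry of $\Delta_1^\perp$; this boundary/limit-set technique is deliberately set up so it can be reused in Lemma \ref{component_totally} and the proofs of the main theorems. You instead work infinitesimally: since $k_{\Delta^m}(p;v)=\max_j|v_j|/(1-|p_j|^2)$, each $df_p$ is an $\mathbb{R}$-linear isometry between weighted $\ell^\infty$-sums of copies of $(\mathbb{C},|\cdot|)$, and your face-lattice (Banach--Stone type) classification correctly shows such a map must permute the coordinate complex lines --- the maximal proper faces of the unit ball are exactly the sets $\{v_j=c\}$, they come in $m$ parallel families, and translations between parallel faces span $\mathbb{C}e_j$, so the permutation and the $O(2)$ blocks follow; continuity then freezes the permutation and the holomorphic/anti-holomorphic type, and Schwarz--Pick finishes each factor. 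This is shorter and more elementary, avoiding Lemma \ref{polyD} and the boundary analysis entirely. Two points you should make explicit to be airtight: (i) justify in a line that a $C^1$ distance isometry has differential preserving the infinitesimal Kobayashi metric (immediate here from the explicit formulas $d^K_{\Delta^m}=\max_j d^K_\Delta$ and the local expansion $d^K(p,p+tv)=t\,k(p;v)+o(t)$), and (ii) note that the maximal faces have affine dimension $2m-2$, not codimension one in $\mathbb{R}^{2m}$ --- the product of discs has no flat $(2m-1)$-dimensional faces --- though this does not affect the argument since those are still precisely the maximal proper faces that $df_p$ must permute. A small trade-off: the paper's proof also yields the boundary behavior recorded in Corollary \ref{poly-KK} along the way, whereas in your approach that statement is read off only a posteriori from the explicit final form of $f$.
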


\begin{proof}
We will use induction on $m$. If $m=1$, then it is already known. Now assume that the proposition holds for $m-1\geq 1$. 

Let $f=(f_1,\ldots, f_m)$ and $\gamma:\mathbb R\to \Delta_1$ be a unit speed complete geodesic. Then for any $y\in \Delta_1^\perp=\Delta_2\times\cdots\times\Delta_m$, the map $t\to (\gamma(t),y)$ is a complete geodesic in $\Delta_1\times\cdots\times\Delta_m$. Hence by Lemma~\ref{polyD}, there exists $j$ depending on $y$ such that $f_j(\gamma(\cdot), y):\mathbb R\to \Delta_j$ is a complete geodesic. Since $f$ is $C^1$, by choosing general $\gamma$ and $y$, we may assume that
$f_j(\gamma(\cdot), y)$ is a complete geodesic for all $y$ in an open set $U\subset \Delta_1^\perp$.   
Moreover, after composing automorphisms of $\Delta^m$, we may assume that $\gamma(0)=0$, $0\in U$, $f_1(\gamma(t), 0)=\gamma(t)$ and $f_1(e^{i\theta}\gamma(\cdot), y)$ is a unit speed complete geodesic in $\Delta_1$ for all $(y, \theta)\in U\times I$ for some open interval $I\subset (-\pi,~\pi]$ containing $0$. 

 Since $f_1(0,0)=0$ by assumption, if $\theta\in I$, then there exists $\eta$ such that
$$f_1(e^{i\theta}\gamma(t),0)=e^{i\eta}\gamma(t)$$
for all $t$.
Since $f_1(\gamma(t),0)=\gamma(t)$, we have 
\begin{equation}
    \begin{aligned}
    d_\Delta(\gamma(t), e^{i\theta} \gamma(t)) &= d_{\Delta^m} ((\gamma(t), 0), (e^{i\theta} \gamma(t), 0)) 
    =d_{\Delta^m} (f(\gamma(t), 0), f(e^{i\theta} \gamma(t), 0))\\
    &\geq d_{\Delta} (f_1(\gamma(t), 0), f_1(e^{i\theta} \gamma(t), 0))
    = d_\Delta(\gamma(t), e^{i\eta}\gamma(t))
    \end{aligned}
\end{equation}
and by a similar way
$$d_\Delta(\gamma(t), e^{i\theta}\gamma(-t))\geq d_\Delta(\gamma(t), e^{i\eta}\gamma(-t)).$$
Therefore $\eta=\pm \theta$.
Assume that $f_1(\cdot, 0)$ is orientation preserving near $0$. Then $\eta=\theta$ for all $\theta\in I$. 
Define a sector
$$C:=\bigcup_{\theta\in I}e^{i\theta}\gamma(\mathbb R).$$
Then we obtain
$$f_1(\zeta, 0)=\zeta$$
for all $\zeta\in C$.
Similarly, we can show that for each $y\in U$, there exists an automorphism $\phi_y$ of $\Delta$ such that
$$
f_1(\zeta, y)=\phi_y(\zeta)
$$
for all $\zeta\in C$.

On the other hand, since $f$ is totally geodesic, we obtain
$$ 
d^K_{\Delta^{m}}(f(\zeta,x), f(\zeta,y))
=d^K_{\Delta^{m}}((\zeta,x),(\zeta, y))
=d^K_{\Delta^{m-1}}(x,y)
<\infty$$
for any  $x, y\in \Delta_1^\perp$.
Therefore the limit set $\lim_{t\to \infty}f(e^{i\theta}\gamma(t), \Delta_1^\perp)$ is contained in the closure of a unique boundary component $C_\theta\subset\partial\Delta_1\times{\Delta_1^\perp}$ depending only on $\theta$.  
It implies that for all $y\in U$,
\begin{equation}\nonumber
\lim_{t\to \infty}\phi_y(e^{i\theta}\gamma(t))=\lim_{t\to\infty}f_1(e^{i\theta}\gamma(t), y)=\lim_{t\to\infty}f_1(e^{i\theta}\gamma(t), 0)
\end{equation}
for all $\theta\in I$
and therefore 
$$\phi_y(\zeta)=\zeta.$$
That is, on $C\times U$, $f_1(\zeta, y)$ is independent of $y$. 

Let $\widetilde f:=(f_2,\ldots,f_m):\Delta^m\to \Delta_2\times\cdots\times\Delta_m$. Since 
$$\partial_yf_1(\zeta, y)=0,\quad (\zeta, y)\in \overline{C\times U},$$
there exists an open set $W\subset \Delta_1$ containing $C$ such that 
for any complete geodesic $\widetilde \gamma$ in $\Delta_1^\perp$ passing through a point in $\overline U$, $\widetilde f(\zeta, \widetilde\gamma(\cdot))$, $\zeta\in W$ is a complete geodesic in $\Delta_1^\perp$ and the limit set $\lim_{t\to\infty}f(\Delta_1,\widetilde\gamma(t))$ is contained in the closure of a boundary component in $\Delta_1\times \partial\Delta_1^\perp$.
Then by the same argument, we can choose an open cone $V\subset\Delta_1^\perp$ with vertex $0\in U$ such that on $V$, 
\begin{equation}\label{W}
 \widetilde f(\zeta, \cdot)= \widetilde f(0,\cdot)
 \end{equation}
for all $\zeta\in W$.
Hence if $y\in V\cap U$, then for any geodesic $\Gamma:\mathbb R\to W,$ 
$$\widetilde f(\Gamma(t),y)=\widetilde f(0, y).$$

Let $x\in \Delta_1^\perp$ and $y\in V\cap U$. Since for any geodesic $\Gamma:\mathbb R\to W,$ 
we have
\begin{equation}\nonumber
\begin{aligned}
&d_{\Delta_1^\perp}(\widetilde f(\Gamma(t), x), \widetilde f(0,y))=
d_{\Delta_1^\perp}(\widetilde f(\Gamma(t), x), \widetilde f(\Gamma(t),y))\\
&\quad\quad\quad\leq d_{\Delta^m}(f(\Gamma(t), x), f(\Gamma(t),y))
= d_{\Delta^m}((\Gamma(t), x), (\Gamma(t),y))<\infty,
\end{aligned}
\end{equation}
for any $x\in \Delta_1^\perp$, $ \widetilde f(\Gamma(\cdot), x)$ is not a geodesic and therefore $f_1(\Gamma(\cdot), x)$ should be a complete geodesic in $\Delta_1$. Consider a family of complete geodesic $\Gamma_\theta:=e^{i\theta}\gamma:\mathbb R\to C$, $\theta\in  I.$ Then by the same argument, we obtain that on $C\times\Delta_1^\perp$, $f_1(\zeta, x)$ is independent of $x$, i.e.
$$f_1(\zeta, x)=f_1(\zeta, 0)=\zeta,\quad(\zeta,x)\in C\times\Delta_1^\perp$$
and $\widetilde f(\zeta,\cdot):\Delta_1^\perp\to\Delta_1^\perp$ is an isometry for all $\zeta\in C$. By induction argument, there exist automorphisms $\Phi_\zeta, \Psi_\zeta$ of $\Delta_1^\perp$ such that $\Psi_\zeta\circ\widetilde f(\zeta,\Phi_\zeta(\cdot))$ is a desired form. On the other hand by \eqref{W}, $\widetilde f(\zeta,\cdot)$ is equal to $\widetilde f(0,\cdot)$ on an open cone $V$.
Therefore 
we obtain
\begin{equation}\label{W2}
\widetilde f(\zeta,x)=\widetilde f(0,x)
\end{equation}
for all $(\zeta,x)\in C\times \Delta_1^\perp$.

Choose a complete geodesic $\Gamma:\mathbb R\to \Delta_1$ that passes a point in the interior of $C$. By \eqref{W2},  $\widetilde f(\Gamma(\cdot),x)$ is not a geodesic for all $x\in \Delta_1^\perp$. Therefore $f_1(\Gamma(\cdot), x)$ is a complete geodesic in $\Delta_1$ such that 
if $\Gamma(t)\in C$, then
$$f_1(\Gamma(t),x) =f_1(\Gamma(t), 0)=\Gamma(t),$$ 
implying that
$$f_1(\zeta,x)=\zeta$$
for all $(\zeta,x)\in \Gamma(\mathbb R)\times \Delta_1^\perp$.
Since $\Gamma$ is arbitrary, we obtain
$$f_1(\zeta,x)=\zeta$$
for all $(\zeta,x)\in \Delta_1\times \Delta_1^\perp$
and $\widetilde f(\zeta,\cdot):\Delta_1^\perp\to\Delta_1^\perp$ is an isometry for all $\zeta\in \Delta_1$.

Let $\gamma$ be a complete geodesic in $\Delta^\perp_1$. Then 
$$f(\zeta, \gamma(t))=(f_1(\zeta,\gamma(t)), \widetilde f(\zeta, \gamma(t)))=(\zeta, \widetilde f(\zeta,\gamma(t))).$$
Therefore the limit set $\lim_{t\to\infty}f(\Delta_1, \gamma(t))$ should be contained in a boundary component in $\Delta_1\times\partial\Delta_1^\perp$ depending only on $\gamma$. Since $\widetilde f(\zeta,\cdot)$ is a desired form by induction, this implies that $\widetilde f$ is independent of $\zeta$, i.e.
$$\widetilde f(\zeta,\cdot)=\widetilde f(0,\cdot),$$
which completes the proof.
\end{proof}

\begin{corollary}\label{poly-KK}
Let $\Delta^m$ be an $m$-dimensional polydisc and let 
$f:\Delta^m\to\Delta^m$ be a $C^1$ isometry with respect to the Kobayashi metric.
Then $f$ extends continuously to $\partial\Delta^m$ and if two $C^1$ isometry $f, g:\Delta^m\to \Delta^m$ coincide on an open set in the Shilov boundary of $\Delta^m$, then $f\equiv g$.
\end{corollary}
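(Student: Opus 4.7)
The plan is to leverage Proposition~\ref{poly-K} to reduce everything to an explicit normal form. That proposition provides automorphisms $A, B$ of $\Delta^m$ with $A \circ f \circ B = (\Phi_1, \ldots, \Phi_m)$, each $\Phi_i \in \{\mathrm{id}, \zeta \mapsto \bar\zeta\}$. Since $\mathrm{Aut}(\Delta^m) \cong \mathrm{Aut}(\Delta)^m \rtimes S_m$, unpacking this composition yields
$$f(\zeta_1, \ldots, \zeta_m) = \bigl(\tilde\phi_1(\zeta_{\sigma(1)}), \ldots, \tilde\phi_m(\zeta_{\sigma(m)})\bigr)$$
for some permutation $\sigma$ of $\{1, \ldots, m\}$, where each $\tilde\phi_i$ is either a M\"obius transformation of $\Delta$ or the composition of such a transformation with complex conjugation. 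A M\"obius transformation of $\Delta$ has its unique pole strictly outside $\overline\Delta$, so it extends holomorphically to a neighborhood of $\overline\Delta$; conjugation is entire. Thus each $\tilde\phi_i$ extends continuously across $\partial\Delta$, and $f$ extends continuously to $\overline{\Delta^m}$, proving the first assertion.

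For the uniqueness assertion, I would write both $f$ and $g$ in the above normal form with data $(\sigma, \tilde\phi_i)$ and $(\tau, \tilde\psi_i)$, using that the Shilov boundary of $\Delta^m$ is $(\partial\Delta)^m$. The first step is to show $\sigma = \tau$: if $\sigma(i) \neq \tau(i)$ for some $i$, the coordinate identity $\tilde\phi_i(\zeta_{\sigma(i)}) = \tilde\psi_i(\zeta_{\tau(i)})$ holds on an open subset of $(\partial\Delta)^m$ in which $\zeta_{\sigma(i)}$ and $\zeta_{\tau(i)}$ vary independently in $\partial\Delta$; this would force both sides to be constant, contradicting the non-constancy of M\"obius maps. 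The second step is, for each $i$, to deduce $\tilde\phi_i \equiv \tilde\psi_i$ from their agreement on an open arc of $\partial\Delta$. If both $\tilde\phi_i$ and $\tilde\psi_i$ are holomorphic (or both antiholomorphic), then analytic continuation across a boundary arc gives the equality throughout $\Delta$.

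The step I expect to be the main obstacle is the mixed case, where one of $\tilde\phi_i, \tilde\psi_i$ is holomorphic and the other antiholomorphic. Writing both maps explicitly in M\"obius form and using $\bar\zeta = 1/\zeta$ on $\partial\Delta$ turns the boundary equality into a rational identity on $\hat{\mathbb{C}}$ of the shape
$$\lambda \frac{\zeta - a}{1 - \bar a \zeta} \;=\; \mu \frac{1 - b \zeta}{\zeta - \bar b}, \qquad |a|, |b| < 1;$$
a short coefficient comparison between the $\zeta^0$ and $\zeta^2$ terms forces $|a|^2 |b|^2 = 1$, which contradicts $|a|, |b| < 1$. Hence $\tilde\phi_i$ and $\tilde\psi_i$ must have the same holomorphic/antiholomorphic type, and the previous step then yields $\tilde\phi_i \equiv \tilde\psi_i$. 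Combined with $\sigma = \tau$, this gives $f \equiv g$. No further input beyond Proposition~\ref{poly-K}, rational-function identity, and analytic continuation from a boundary arc is needed.
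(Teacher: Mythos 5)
Your proposal is correct and follows the route the paper clearly intends: the corollary is stated without proof immediately after Proposition~\ref{poly-K}, and deriving both the boundary extension and the uniqueness from the explicit normal form (coordinatewise M\"obius or conjugate-M\"obius maps composed with a permutation) is exactly the implicit argument, with your rational-identity computation correctly ruling out the mixed holomorphic/antiholomorphic case on a boundary arc. No gaps.
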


\section{Proof of theorems}

\begin{lemma}\label{holo-disc}
Let $\Omega$ be an irreducible bounded symmetric domain and let $F\colon\Omega\to\mathbb C^n$ be a $C^1$ map such that $F_*(v)\neq 0$ for all $v\neq 0.$
If $F$ is holomorphic or anti-holomorphic on every minimal disc, then $F$ is either holomorphic or anti-holomorphic.
\end{lemma}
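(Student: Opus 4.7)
The plan is to convert the disc-by-disc hypothesis into a pointwise condition on the Jacobian of $F$ at each point, and then globalize by connectedness of $\Omega$.

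At each $x\in \Omega$, decompose the real differential as $dF_x = \partial F_x + \bar\partial F_x$ and introduce the complex subspaces
\[
H_x := \{\, v\in T_x^{1,0}\Omega : \bar\partial F_x(\bar v)=0\,\},\qquad
A_x := \{\, v\in T_x^{1,0}\Omega : \partial F_x(v)=0\,\}.
\]
Both are complex linear subspaces; for $H_x$, use that $v\mapsto \bar\partial F_x(\bar v)$ is conjugate-linear, so its kernel is $\mathbb{C}$-stable. For a minimal disc $\Delta$ through $x$ with $T_x^{1,0}\Delta = \mathbb{C}v$, the chain rule identifies $F|_\Delta$ being holomorphic at $x$ with $v\in H_x$, and $F|_\Delta$ being anti-holomorphic at $x$ with $v\in A_x$. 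Since every rank $1$ vector at $x$ is tangent to some minimal disc, the hypothesis yields $\mathcal{C}_x\subset \mathbb{P}H_x\cup \mathbb{P}A_x$ inside $\mathbb{P}T_x^{1,0}\Omega$.

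Next I would invoke two standard facts about the characteristic variety $\mathcal{C}_x$ in the irreducible case: (i) it is an irreducible projective subvariety of $\mathbb{P}T_x^{1,0}\Omega$, being the orbit of the isotropy $K_x$ acting on the highest-weight ray in $T_x^{1,0}\Omega$; and (ii) the linear span of rank $1$ vectors is all of $T_x^{1,0}\Omega$, since the isotropy acts irreducibly on $T_x^{1,0}\Omega$ and the span of an orbit is invariant. By (i), the irreducible variety $\mathcal{C}_x$ cannot be properly covered by the two closed linear subvarieties $\mathbb{P}H_x$ and $\mathbb{P}A_x$, hence one of them contains $\mathcal{C}_x$; by (ii), this forces $H_x = T_x^{1,0}\Omega$ or $A_x = T_x^{1,0}\Omega$. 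So at every $x\in\Omega$, either $\bar\partial F_x \equiv 0$ or $\partial F_x \equiv 0$.

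Finally, set $U := \{x\in\Omega : \bar\partial F_x = 0\}$ and $V := \{x\in\Omega : \partial F_x = 0\}$. Both are closed in $\Omega$ by continuity of $dF$, and the previous paragraph gives $\Omega = U\cup V$. The injectivity assumption $F_*(v)\neq 0$ for $v\neq 0$ forces $dF_x\neq 0$ everywhere, hence $U\cap V = \emptyset$. Connectedness of $\Omega$ now gives $\Omega = U$, in which case $F$ is holomorphic, or $\Omega = V$, in which case $F$ is anti-holomorphic. The main delicacy is step (i)/(ii): irreducibility and linear non-degeneracy of $\mathcal{C}_x$ in every type. These are classical and follow from the structure of the isotropy representation, as developed in \cite{Mok89, Mok_Tsai_1992}.
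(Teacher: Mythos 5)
Your proof is correct and is essentially the paper's argument: both rest on the observation that the ``holomorphic'' and ``anti-holomorphic'' loci of rank-one directions are closed and, because $F_*(v)\neq 0$ for $v\neq 0$, disjoint, so a connectedness argument forces one of them to be everything. The only organizational difference is that you settle the dichotomy fiberwise first, via Zariski-irreducibility of the characteristic variety $\mathcal C_x$, and then globalize over $\Omega$, whereas the paper runs a single clopen argument on the total space $\mathcal C(\Omega)$; your write-up also makes explicit the final linear-algebra step (that rank-one vectors span $T_x\Omega$, so $\bar\partial F_x$ vanishing on $\mathcal C_x$ forces $\bar\partial F_x=0$), which the paper leaves implicit.
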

\begin{proof}
Let $F\colon\Omega\to \mathbb C^n$ be a $C^1$-map. 
Define
$$U:=\{[v]\in \mathcal C(\Omega): \bar v F=0\}.$$
By assumption, we may assume that $U$ is a nonempty set. Then it is enough to show that $U=\mathcal C(\Omega)$. By continuity of $\bar \partial F$, $U$ is a closed set. Assume that $\partial U\neq\emptyset.$ Choose $[v]\in \partial U$. Then by continuity of $dF$, we obtain 
$$v F=\bar v F=0,$$
i.e. 
$$dF( \text{Re }v)=dF(\text{Im }v)=0.$$
Since $dF\neq 0$, this does not happen. Since $\Omega$ is irreducible, we obtain $U=\mathcal C(\Omega)$. 
\end{proof}
 
\begin{lemma}\label{component_totally}
Let $\Omega$ and $\Omega'$ be bounded symmetric domains and let $f\colon\Omega\to \Omega'$ be a $C^1$-smooth totally geodesic isometric embedding with respect to their Kobayashi metrics. Suppose 
$${\rm rank }(\Omega)\geq {\rm rank }(\Omega').$$
Then 
$$ {\rm rank }(\Omega)={\rm rank }(\Omega')$$
and for any totally geodesic polydisc $\Delta^r$ in $\Omega$,
$f(\Delta^r)$ is contained in a maximal totally geodesic polydisc in $\Omega'$. 
\end{lemma}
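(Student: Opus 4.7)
The plan is to first reduce the statement to a maximal polydisc $\Delta^R \subset \Omega$ with $R = \mathrm{rank}(\Omega)$ (any given polydisc extends to a maximal one, so $f(\Delta^r) \subset f(\Delta^R)$ and the conclusion for $\Delta^R$ implies the conclusion for any smaller $\Delta^r$); then analyze $f$ along a $T^R$-family of unit-speed real geodesics inside $\Delta^R$ via Lemma~\ref{polyD}, and combine a dimension count with $C^1$-continuity to force $f(\Delta^R)$ into a single maximal polydisc of $\Omega'$.

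After composing with automorphisms of $\Omega$ and $\Omega'$ (which are Kobayashi isometries) assume $0 \in \Delta^R$ and $f(0)=0$. For each $\theta = (\theta_1,\ldots,\theta_R) \in T^R$ consider
\[
\gamma_\theta(t) = (\tanh(t)\,e^{i\theta_1},\ldots,\tanh(t)\,e^{i\theta_R}) \in \Delta^R,
\]
which by Lemma~\ref{max} is a unit-speed real geodesic with each of its $R$ disc-components at unit speed and initial velocity $v_\theta$ of rank $R$. Its image $\eta_\theta := f\circ\gamma_\theta$ is a unit-speed real geodesic in $\Omega'$, and by Lemma~\ref{polyD} lies in $P_\theta \times P_\theta^\perp$ for some totally geodesic polydisc $P_\theta \subset \Omega'$ of dimension $K(\theta) \le \mathrm{rank}(\Omega') \le R$, with all $K(\theta)$ factors of the $P_\theta$-component moving at unit speed. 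The core claim is that $K(\theta) = R$ and that the $P_\theta$'s can be taken to coincide with a single maximal polydisc $P \subset \Omega'$ independent of $\theta$; this simultaneously gives $\mathrm{rank}(\Omega)=\mathrm{rank}(\Omega')=R$ and $\bigcup_\theta \eta_\theta(\mathbb R) \subset P$.

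For the core claim, since $df_0$ is injective on $T_0 \Delta^R$ (as $f$ is a $C^1$ isometric embedding), $\theta \mapsto \eta_\theta'(0) = df_0(\gamma_\theta'(0))$ parameterizes a smoothly embedded real $R$-torus in $T_0 \Omega'$, each of whose members is a fully unit-speed vector of the polydisc $P_\theta$. By Lemma~\ref{max} the Kobayashi norm on polydisc tangent spaces is the supremum of disc norms, and the fully unit-speed vectors of a polydisc of dimension $K$ form a real $K$-torus; a dimension count therefore forces $\mathrm{rank}(\Omega') \ge R$, and together with the hypothesis yields $\mathrm{rank}(\Omega)=\mathrm{rank}(\Omega')=R$. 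Because a generic rank-$R$ vector in a bounded symmetric domain lies in a \emph{unique} maximal polydisc, the $C^1$-continuity of $\theta \mapsto \eta_\theta'(0)$ makes $P_\theta$ locally constant on an open dense subset of $T^R$, and connectedness of $T^R$ pins it down to a common polydisc $P$.

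Finally, to upgrade $\eta_\theta(\mathbb R) \subset P$ (which a priori only covers the real $(R+1)$-dimensional radial subset of $\Delta^R$) to $f(\Delta^R) \subset P$, one either repeats the argument with the wider family of unit-speed geodesics $(\tanh(a_1 t)e^{i\theta_1},\ldots,\tanh(a_R t)e^{i\theta_R})$ with $\max_j a_j = 1$, which together foliate $\Delta^R$ through $0$, or observes that the resulting $f|_{\Delta^R}\colon \Delta^R \to P$ is a $C^1$ Kobayashi isometry between equidimensional polydiscs and invokes Proposition~\ref{poly-K} to conclude. The main obstacle is the uniformity step: showing that $P_\theta$ can be chosen independent of $\theta$, since Lemma~\ref{polyD} produces $P_\theta$ non-canonically and the rank of $df_0(v_\theta)$ may drop on codimension-one loci in $T^R$. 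Overcoming this requires the $C^1$-smoothness of $f$ combined with the generic uniqueness of the maximal polydisc through a rank-$R$ vector, with the degenerate directions absorbed by continuity.
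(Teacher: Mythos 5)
Your overall framework (reduce to a maximal polydisc, push real geodesics forward, apply Lemma~\ref{polyD}, and use $C^1$-continuity to make the target polydisc uniform) is in the spirit of the paper, but the step you call the ``core claim'' has a genuine gap, and it is exactly the step that carries the content of the lemma. You argue that since each $\eta_\theta'(0)$ is a ``fully unit-speed'' vector of the polydisc $P_\theta$ and the fully unit-speed vectors of a $K$-dimensional polydisc form a real $K$-torus, an embedded $R$-torus of such vectors forces $K\ge R$. This does not follow: the polydisc $P_\theta$ varies with $\theta$, so the relevant set is the \emph{union}, over all totally geodesic $K$-polydiscs through $0$ in $\Omega'$, of their unit-speed tori; moreover $\eta_\theta'(0)$ also carries a $P_\theta^\perp$-component (the $\Psi$ of Lemma~\ref{polyD} is not a geodesic, but it is not constant either). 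That union, fibred over the space of polydiscs and with the perpendicular directions as an extra factor, has real dimension far exceeding $R$ in general --- already the cone over the characteristic variety $\mathcal C_0(\Omega')$ of rank-one directions is large --- so an $R$-torus embeds into it with no dimensional obstruction. Consequently $\mathrm{rank}(\Omega')\ge R$ is not established, and the subsequent appeal to the uniqueness of the maximal polydisc through a generic rank-$R$ vector is circular, since whether $df_0(v_\theta)$ has rank $R$ in $\Omega'$ is precisely what is at stake.

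The paper closes this gap quite differently: it fixes one disc factor $\Delta_1$ and studies $f\circ\gamma_1:\mathbb R\times\Delta_1^\perp\to Q_1\times Q_1^\perp$. A boundary-component (limit set) argument shows that all the geodesics $\Phi_1(\cdot,x)$ in $Q_1$ share the same endpoints at infinity, so $\Phi_1(\mathbb R\times\Delta_1^\perp)$ is real one-dimensional; since $f$ is a totally geodesic isometric embedding, the distances in the $\Delta_1^\perp$ directions must then be carried entirely by $\Psi_1$, i.e.\ $\Psi_1(t,\cdot):\Delta_1^\perp\to Q_1^\perp$ is itself a totally geodesic isometric embedding. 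Induction on the rank applied to this map gives $\mathrm{rank}(Q_1^\perp)=r-1$, whence $\dim Q_1=1$ and $\mathrm{rank}(\Omega')=r$; a further argument with the rotated vectors $v_\theta$ and intersections of the resulting products then traps $f(\Delta^r)$ in a single maximal polydisc. This inductive transfer of the isometry to the perpendicular component is the idea missing from your proposal; without it (or a substitute for it), neither the rank equality nor the containment of $f(\Delta^r)$ in a maximal polydisc is reached.
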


\begin{proof}
Let $r$ be the rank of $\Omega$. We will use induction on $r$. If $r=1$, then it is trivial. Let $r>1$.
It is enough to show it for a general maximal totally geodesic polydisc $\Delta^r\subset \Omega$ passing through $0$. 
Let $v\in T_0\Delta^r$ be a general vector realizing the maximal holomorphic sectional curvature. 
After rotation, we may assume
\begin{equation}\label{v}
v=(1,\ldots,1)\in T_0(\Delta_1\times \cdots\times \Delta_r).
\end{equation}
For each $i=1,\ldots,r$ and a point $(0,x)\in  \{0\}\times \Delta_{i}^\perp\subset \Omega$,
let $\gamma_i(\cdot,x)=\gamma_i^v(\cdot,x)\colon\mathbb R\to \Delta_i\times\Delta_i^\perp$ be a unit speed complete real geodesic with respect to the Bergman metric such that 
$$\gamma_i(0,x)=(0,x),\quad \gamma_i'(0, x)=(1,0,\ldots, 0)\in T_{(0,x)}(\Delta_i\times x).$$
Since $f$ is totally geodesic, 
by Lemma \ref{polyD}, there exists a unique totally geodesic polydisc $Q_i(x)=Q_i^v(x)\subset \Omega'$ passing through $f(0, x)$ such that
\begin{equation}\label{del-x}
f\circ \gamma_i(t, x)=(\Phi_i(t, x),\Psi_i(t, x))\in Q_i(x)\times Q_i(x)^\perp,\quad t\in \mathbb R,
\end{equation} 
where each component of $\Phi_i(\cdot, x)$ is a geodesic of the disc and $\Psi_i(\cdot,x)$ is not a geodesic. Since $f$ is $C^1$, by continuity of the derivatives of $f$, we may assume that the dimension of $Q_i(x)=Q_i^v(x)$ is constant on some open neighborhood of $v\in T_0\Omega$.

Now assume $i=1$.
Since $f$ is an isometry, for each $x\in \Delta_1^\perp$, 
$$ d_{\Omega'}^K(f\circ \gamma_1(t, 0), f\circ\gamma_1(t,x))=d_\Omega^K(\gamma_1(t, 0),\gamma_1( t, x))=d_{\Delta_1^\perp}^K(0, x)<\infty.$$
Hence 
the limit set $\lim_{t\to \infty}f\circ \gamma_1(t, \Delta_1^\perp) $ should be contained in a unique boundary component $\Sigma$ of $\Omega'$. Choose a maximal totally geodesic polydisc $R(x)\subset \Omega'$ passing through $f(0, x)$ such that $R(x)\times \Sigma\subset \Omega'$ is totally geodesic. Since $\Sigma$ is independent of $x$, $R(x)$ is parallel with $R(0)$. Moreover, since 
$$T\Sigma\subset  \mathcal N_{[\partial_t\Phi_1(t, x)]},$$   
$Q_1(x)$ is contained in $R(x)$. Since $R(x)$ is parallel with $R(0)$, by continuity of the derivatives of $f$, we may assume that $Q_1(x)$ is parallel with $Q_1(0)$ for all $x\in \Delta_1^\perp$ sufficiently close to $0$. 
In particular, 
\begin{equation}\label{del'}
f\circ \gamma_1(\mathbb R, U_1)\subset Q_1\times Q_1^\perp
\end{equation}
on an open set $U_1\subset \Delta_1^\perp$ containing $0$, where $Q_1=Q_1(0).$ Since $\Sigma$ and $R(0)$ is independent of $x$, there exists possibly smaller dimensional polydisc $Q_1\subset R(0)$ such that
$$ f\circ \gamma_1(\mathbb R, \Delta_1^\perp)\subset Q_1\times Q_1^\perp.$$

We may assume that $Q_1$ is the maximal polydisc such that each component of $\Phi_1(\cdot,x)\colon\mathbb R\to Q_1$ is a unit speed complete geodesic in a disc for all $x\in \Delta_1^\perp$. Note that since 
$$f\circ \gamma_1=(\Phi_1,\Psi_1):\mathbb R\times\Delta_1^\perp\to Q_1\times Q_1^\perp$$ 
is totally geodesic isometric embedding, $\Phi_1$ and $\Psi_1$ are distance decreasing maps.
Therefore for any $x,y\in \Delta_1^\perp$,
\begin{equation}\label{inequal}
\begin{aligned}
d^K_{\Delta_1^\perp}(x,y) 
&= d_\Omega^K(\gamma_1(t,x), \gamma_1(t,y))
=d^K_{\Omega'}( f\circ\gamma_1(t,x), f\circ\gamma_1(t,y))\\ 
&= d^K_{Q_1\times Q_1^\perp} ((\Phi_1(t,x), \Psi_1(t,x)), (\Phi_1(t,y), \Psi_1(t,y))) \\
&\geq  d^K_{Q_1}(\Phi_1(t,
x), \Phi_1(t,y)).
\end{aligned}
\end{equation}
Since each component of  $\Phi_1(\cdot, x)\colon \mathbb R\to Q_1$ is a unit speed complete geodesic in a disc for all $x\in \Delta_1^\perp$, \eqref{inequal} implies
$$
\lim_{t\to\infty} \Phi_1(t,x) = \lim_{t\to\infty} \Phi_1(t,y) \quad \text{ and } \lim_{t\to -\infty} \Phi_1(t,x) = \lim_{t\to -\infty} \Phi_1(t,y)$$
and hence
\begin{equation}
\Phi_1(\mathbb R,x) = \Phi_1(\mathbb R,y),
\end{equation}
i.e.
\begin{equation}\label{open}
f\circ\gamma_1(\mathbb R, \Delta^\perp_1)\subset \Phi_1(\mathbb R,0)\times Q_1^\perp.
\end{equation}

We will show that 
$$\Psi_1(t,\cdot):\Delta_1^\perp\to Q_1^\perp$$
is a totally geodesic isometric embedding for all $t\in \mathbb R$. 
Assume otherwise. Then by continuity of the derivatives of $f$, there exists an open interval $I\subset \mathbb R$ such that $\Psi_1(t, \cdot)$ is not a totally geodesic map for all $t\in I\subset \mathbb R$. After shrinking $I$ if necessary, we may assume that there exists  a $C^1$ complete geodesic $\gamma_0:\mathbb R\to \Delta_1^\perp$, an open set $U\subset \mathbb R$ and small $\epsilon,\eta>0$ such that  
$$d^K_{Q_1^\perp}(\Psi_1(t,\widetilde\gamma(s_1)), \Psi_1(t, \widetilde\gamma(s_2))\leq (1-\eta)|s_1-s_2|,
\quad t\in I,~s_1, s_2\in U$$ 
for all geodesic $\widetilde \gamma$ such that $\|\gamma_0-\widetilde\gamma\|_{C^1(U)}<\epsilon.$ 
Therefore $\Phi_1(t, \widetilde\gamma)$ should be a complete geodesic for all such geodesics $\widetilde \gamma.$
This implies that
there exists an open set $\widetilde U\subset \Delta_1^\perp$ such that $\Phi_1(t,\cdot)$ is locally one to one on $\widetilde U$. On the other hand, by \eqref{open}, we obtain 
$$\Phi_1(\mathbb R\times \Delta_1^\perp)=\Phi_1(\mathbb R,0),$$
which is a contradiction. 
We have seen that $\Psi_1(t, \cdot)\colon\Delta_1^\perp\to Q_1^\perp$ is a totally geodesic isometric embedding for all $t\in \mathbb R$. Hence by induction argument, $Q_1^\perp$ is of rank $(r-1)$. Since 
$${\rm rank }(\Omega')=\dim Q_1+{\rm rank }(Q_1^\perp)\leq {\rm rank }(\Omega)=r,$$ 
$Q_1$ is a minimal disc and 
$${\rm rank }(\Omega')={\rm rank }(\Omega).$$

Let $P_1:=\Delta_2\times\cdots\times\Delta_r$. Since $\Psi_1(t, \cdot):\Delta_1^\perp\to Q_1^\perp$ is a totally geodesic isometric embedding, by induction argument, there exists a $(r-1)$-dimensional totally geodesic polydisc $\widetilde P_1(t)\subset Q_1^\perp$ such that $\Psi_1(t, P_1)\subset \widetilde P_1(t)$. On the other hand, since $f$ is an isometry, for any unit speed complete geodesic $\gamma$ in $P_1$, the limit set $\lim_{t\to\infty}f\circ\gamma_1(\mathbb R, \gamma(s))$ should be contained in a unique boundary component, which implies that
$$\lim_{s\to\infty}\Psi_1(a, \gamma(s))=\lim_{s\to\infty}\Psi_1(b, \gamma(s)) ,\quad a, b\in \mathbb R.$$
Since $\gamma$ is arbitrary, on $P_1$, we obtain
\begin{equation}\label{Psi-id}
\Psi_1(a, \cdot)=\Psi_1(b, \cdot)
\end{equation}
by Corollary \ref{poly-KK} with induction argument and 
$$f\circ \gamma_1(\mathbb R,P_1)\subset Q_1\times\widetilde P_1,$$
where $\widetilde P_1:=\widetilde P_1(0).$

Choose another vector 
$$v_\theta=(e^{i\theta},1,\ldots,1)\in T_0 (\Delta_1\times\cdots\times\Delta_r).$$
Since 
$$\gamma_i^{v_\theta}=\gamma_i^v,\quad i=2,\ldots,r,$$
by the same argument as above, we obtain
$$ f\circ\gamma_1^{v_\theta}(\mathbb R\times P_1)\subset Q_1(\theta)\times \widetilde P_1(\theta)$$ 
for some minimal disc $Q_1(\theta)$ and $(r-1)$-dimensional totally geodesic polydisc $\widetilde P_1(\theta)$.
Since 
$$\{0\}\times\widetilde P_1=f\circ \gamma_1(0,P_1)=f\circ\gamma_1^{v_\theta}(0,P_1)=\{0\}\times\widetilde P_1(\theta),$$
we obtain 
$$\widetilde P_1=\widetilde P_1(\theta)$$
for any $\theta$.
Let $B_1$ be the maximal totally geodesic subdomain such that
$$T_0B_1=\mathcal N_{[T_0\widetilde P_1]}.$$
Then
$$T_0Q_1(\theta)\subset T_0B_1.$$
Since $\theta$ is arbitrary, we obtain
$$ f(\Delta_1\times P_1)\subset B_1\times \widetilde P_1.$$

Now consider $f\circ \gamma_2$. Then by the same argument, we obtain
$$f(\Delta_2\times P_2)\subset B_2\times \widetilde P_2,$$
where $P_2=\Delta_1\times\Delta_3\times\cdots\times\Delta_r$
and $\widetilde P_2$ component of $f$ is an isometry on $\{\zeta_2\}\times P_2$ and independent of $\zeta_2\in \Delta_2$.
Since
$$\{0\}\times \widetilde P_1\subset B_2\times \widetilde P_2,$$
there exists a minimal disc $\widetilde \Delta\subset B_2$ such that
$$\{0\}\times \widetilde P_1\subset \widetilde \Delta\times \widetilde P_2.$$
Suppose 
$$T_0\widetilde \Delta\cap T_0\widetilde P_1= \{0\}.$$
Then $\widetilde P_1=\widetilde P_2$.
On the other hand, by \eqref{Psi-id}, $\Psi_1$ is independent of $\zeta_1\in \Delta_1$, contradicting the assumption that $\widetilde P_2$ component of $f$ is an isometry on $\{\zeta_2\}\times P_2.$
Therefore we obtain 
$$ T_0\widetilde P_1\cap T_0 B_2\neq \{0\}.$$
Since $\Delta_1\times P_1=\Delta_2\times P_2=\Delta_1\times\cdots\times\Delta_r$, we obtain
$$ f(\Delta_1\times\cdots\times\Delta_r)\subset \left(B_1\times \widetilde P_1\right)\cap \left(B_2\times \widetilde P_2\right).$$
Therefore one has 
$$ f(\Delta_1\times\cdots\times\Delta_r)\subset \left(B_2\cap \widetilde P_1\right)\times \widetilde P_2,$$
which completes the proof.
\end{proof}

\begin{proof}[Proof of Theorem \ref{main}]
Let $r$ be the rank of $\Omega$. We will use induction on $r$. If $r=1$, it is well-known (cf.\cite{Antonakoudis_2017}). Assume that $r>1$. By Proposition~\ref{poly-K}, Lemma~\ref{holo-disc} and Polydisc Theorem (Theorem \ref{polydisc theorem}), it is enough to show that for a maximal totally geodesic polydisc $\Delta^r$ in $\Omega$ containing $0$,
$f(\Delta^r)$ is contained in a maximal totally geodesic polydisc in $\Omega'$. Hence by Lemma~\ref{component_totally}, we can complete the proof.
\end{proof}

\begin{proof}[Proof of Theorem \ref{Kobayashi isometry}]
We will use induction on $m$. If $m=1$, then it is true by Theorem \ref{main}. Suppose $m>1$. Assume that $\Omega_1$ is an irreducible factor whose rank is maximum among $\Omega_i$'s. We may further assume that the dimension of $\Omega_1$ is maximum among all irreducible factors with the maximum rank. Since $F$ is an isometry, by Proposition~\ref{poly-K}, Lemma~\ref{holo-disc} and Lemma~\ref{component_totally}, for each $x\in \widetilde\Omega:=\Omega_2\times\cdots\times\Omega_m$, $F(\cdot, x):\Omega_1\to \Omega_1\times\cdots\times\Omega_m$ is either holomorphic or anti-holomorphic and preserves rank one vectors. Let $F=(f_1,\ldots,f_m)$. Since $\mathcal C_x(\Omega)$ is a disjoint union of $\mathcal C_{x}(\Omega_i)$ where we regard $\Omega_i$ as a canonically embedded submanifold in $\Omega$, by continuity of the derivatives of $F$, there exists unique $i$ such that if $j\neq i$, then
$$v f_j(\cdot, x)= 0,\quad  [v]\in \mathcal C(\Omega_1).$$
Therefore
$f_i(\cdot, x)\colon\Omega_1\to \Omega_i$ is a totally geodesic isometric embedding. We may assume that $i$ is independent of $x$. 
Then by Theorem~\ref{main}, the rank of $\Omega_i$ is equal to the rank of $\Omega_1$ and $f_i(\cdot, x)\colon\Omega_1\to \Omega_i$ is either holomorphic or anti-holomorphic proper embedding. Since the dimension of $\Omega_1$ is maximum among all irreducible factors with maximum rank, $\Omega_i$ is biholomorphic to $\Omega_1$. Hence up to permutation, we may assume that $i=1$ and $f_1(\cdot, x)\in {\rm Aut}(\Omega_1)$ for all $x\in \widetilde\Omega$.  

Let $x\in \widetilde\Omega$ be fixed. Since $F$ is isometric, we obtain
$$ 
d^K_{\Omega}(F(\zeta, x), F(\zeta, 0))
=d^K_{\Omega}((\zeta, x), (\zeta,0))
\leq d^K_{\widetilde \Omega}(x,0) 
< \infty
$$
for all $\zeta\in \Omega_1$.
Since $F$ is $C^1$, by continuity of the derivatives of $F$, we may assume that $F(\Omega_1\times\{x\})$ is parallel with $\Omega_1\times \{0\}$ for some open neighborhood of $0$ and hence we have
$$\lim_{\zeta\to p}f_1(\zeta, x)=\lim_{\zeta\to p}f_1(\zeta,0)$$
for all $p\in \partial\Omega_1$.
Since $f_1(\cdot,x)\in {\rm Aut}(\Omega_1)$, this implies that
$$f_1(\cdot,x)=f_1(\cdot, 0)$$
for all $x\in \widetilde\Omega$,
i.e. $f_1$ is independent of $x\in \widetilde\Omega$. Therefore $\widetilde f(\zeta,\cdot):=(f_2(\zeta,\cdot),\ldots,f_m(\zeta,\cdot)):\widetilde\Omega\to\widetilde\Omega$ is a totally geodesic isometry for all $\zeta\in \Omega_1$. Hence by induction argument and the argument similar to the above, we obtain
$$\widetilde f(\zeta, \cdot)=\widetilde f(0,\cdot)$$
for any $\zeta\in \Omega_1$,
which completes the proof.
\end{proof}

\begin{proposition}
\label{1-disc}
Let $\Omega$ be a bounded symmetric domain and let $f:\Delta\to \Omega$ be a $C^1$-smooth totally geodesic isometric embedding that extends continuously to the boundary. If 
$f$ is tangential to $\mathcal {RC}(\Omega)$,
then 
$f$ is either holomorphic or anti-holomorphic.
\end{proposition}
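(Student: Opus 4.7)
The plan is to reduce the statement to the known strongly convex ball case by showing that $f(\Delta)$ lies inside a totally geodesic holomorphic ball $V_q\subset\Omega$ arising from Mok's theorem on the variety of minimal discs through a distinguished boundary point. After composing with an element of $\text{Aut}(\Omega)$, I may assume $f(0)=0$. Let $q:=f(1)\in\partial\Omega$, using the continuous boundary extension of $f$. The radial geodesic $t\mapsto\tanh(t)$ in $\Delta$ maps under $f$ to a real geodesic in $\Omega$ tangent to $\mathcal{RC}(\Omega)$, which by Corollary \ref{unique minimal disc} lies in a unique minimal disc $\Delta_0$. Passing to the boundary yields $q\in\partial\Delta_0\cap\partial\Omega$, so $q$ is a boundary point of a minimal disc and $V_q$ is defined and nontrivial.

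The crucial step is to show $f(\Delta)\subset V_q$. For any $z\in\Delta$, let $\gamma_z\colon[0,\infty)\to\Delta$ be the unique Poincar\'e geodesic with $\gamma_z(0)=z$ and $\gamma_z(t)\to 1$ as $t\to\infty$. Then $f\circ\gamma_z$ is a real geodesic in $\Omega$ tangent to $\mathcal{RC}(\Omega)$, so by Corollary \ref{unique minimal disc} it is contained in a unique minimal disc $\Delta_z$. Continuity of $f$ on $\overline\Delta$ gives $\lim_{t\to\infty}f(\gamma_z(t))=f(1)=q$, so $q\in\partial\Delta_z$, and hence $\Delta_z\subset V_q$ and $f(z)\in V_q$. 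By Mok's theorem recalled at the end of Section~2.1, $V_q = F(\mathbb B^{p+1})$ for a holomorphic Kobayashi isometric embedding $F$ of a unit ball, so $\tilde f:=F^{-1}\circ f\colon\Delta\to\mathbb B^{p+1}$ is a well-defined $C^1$ totally geodesic Kobayashi isometric disc into a strongly convex domain. By the Gaussier--Seshadri theorem \cite{Gaussier_Seshadri_2013}, $\tilde f$ is either holomorphic or anti-holomorphic, and since $F$ is holomorphic, the same conclusion holds for $f=F\circ\tilde f$.

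The main obstacle to watch is the second paragraph above: verifying that for every interior $z\in\Delta$, the Poincar\'e geodesic from $z$ toward the boundary point $1\in\partial\Delta$ maps to a real geodesic in $\Omega$ whose containing minimal disc has $q$ on its boundary. This uses essentially both the continuous boundary extension of $f$ and the uniqueness of the minimal disc supplied by Corollary \ref{unique minimal disc}, together with the fact that $V_q$ is totally geodesic in $\Omega$ so that $\tilde f$ inherits the isometric property from $f$.
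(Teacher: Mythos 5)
Your proposal follows essentially the same route as the paper: fix a boundary point of $\Delta$, foliate $\Delta$ by the geodesics ending there, use Corollary~\ref{unique minimal disc} to place each image geodesic in a unique minimal disc whose closure contains $q=f(1)$, conclude $f(\Delta)\subset V_q=F(\mathbb B^{p+1})$, and finish with the ball case of \cite{Antonakoudis_2017, Gaussier_Seshadri_2013}.

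The one step you should not wave through is the assertion that ``$V_q$ is totally geodesic in $\Omega$ so that $\tilde f=F^{-1}\circ f$ inherits the isometric property.'' Mok's theorem gives $F$ as an isometry for the \emph{Bergman} metrics, and $V_q$ need not be totally geodesic for the \emph{Kobayashi} metric in all tangent directions: $T_xV_q$ is a linear subspace and in general contains vectors of rank $\geq 2$, on which $k_\Omega$ is strictly smaller than (a multiple of) $b_\Omega$. What saves the argument is precisely the hypothesis that $f$ is tangential to $\mathcal{RC}(\Omega)$: each $df_x(v)$ is a rank one vector, so $k_\Omega(f(x);df_x(v))=b_\Omega(f(x);df_x(v))$ by Lemma~\ref{max}, which pulls back under $F$ to $b_{\mathbb B^{p+1}}=k_{\mathbb B^{p+1}}$; the reverse inequality comes from the distance decreasing property applied to the holomorphic inclusion $F(\mathbb B^{p+1})\hookrightarrow\Omega$. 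This two-line metric comparison is exactly the computation the paper inserts between the containment $f(\Delta)\subset V_{f(z)}$ and the appeal to the ball rigidity theorem, and your proof needs it in place of the blanket total-geodesy claim.
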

\begin{proof}
Choose $z\in \partial \Delta$ and consider the geodesics, say $\gamma$, whose one of end points are $z$. 
Note that $\Delta$ is foliated by the images of such geodesics.
By Corollary \ref{unique minimal disc}, $f\circ\gamma(I)$ is contained in a unique minimal disc of $\Omega$ and hence $f(\Delta)$ is contained in $V_{f(z)}$ which is the image of a holomorphic isometric embedding $F\colon \mathbb B^{p+1}\rightarrow \Omega$ (see  \cite{Mok_2016}). For $(x,v)\in T_x\Delta$, we have 
\begin{equation}\nonumber
\begin{aligned}
k_\Delta(x;v) 
= k_\Omega(f(x); df_x(v))
&\leq b_\Omega(f(x); df_x(v)) \\
&= b_{\mathbb B^{p+1}}(F^{-1}\circ f(x), d(F^{-1}\circ f)_x(v))\\
&= k_{\mathbb B^{p+1}} (F^{-1}\circ f(x), d(F^{-1}\circ f)_x(v)).
\end{aligned}
\end{equation}
Let $\Delta_x\subset \Omega$ be the minimal disc passing through $f(x)$ tangential to $df_x(v)$. By the distance decreasing property, 
\begin{equation}\nonumber
\begin{aligned}
k_\Delta(x;v) 
&= k_\Omega(f(x); df_x(v))
\geq d_{\Delta_x}(f(x);df_x(v))\\
&\geq k_{F(\mathbb B^{p+1})}(f(x); df_x(v))\geq k_{\mathbb B^{p+1}} (F^{-1}\circ f(x), d(F^{-1}\circ f)_x(v)),
\end{aligned}
\end{equation}
and as a result $$
k_\Delta(x;v) 
=k_{\mathbb B^{p+1}} (F^{-1}\circ f(x), d(F^{-1}\circ f)_x(v)).
$$
By \cite{Antonakoudis_2017, Gaussier_Seshadri_2013}, $F^{-1}\circ f$ is either holomorphic or anti-holomorphic and hence $f$ has the same property as well.
\end{proof}

\begin{corollary}\label{ball to BSD}
Let $f:\mathbb B^n\to \Omega$ be a $C^1$ totally geodesic isometric embedding extending continuously to the boundary such that 
$$[f_*(v)]\in \mathcal{RC}(\Omega),\quad  $$
for any $v\neq 0\in T\mathbb B^n$.
Then $f$ is either holomorphic or anti-holomorphic. Moreover, $f$ is a standard embedding.
\end{corollary}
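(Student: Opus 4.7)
The plan is to reduce to the one-dimensional case already handled by Proposition \ref{1-disc}, upgrade the disc-wise conclusion to a global one via Lemma \ref{holo-disc}, and then invoke rigidity of Bergman isometric embeddings between complex balls.

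First I would check that $df_x$ is injective for every $x\in \mathbb B^n$: since $f$ is a $C^1$ Kobayashi isometry, differentiating the identity $d^K_\Omega(f(x), f(y)) = d^K_{\mathbb B^n}(x,y)$ along a curve yields $k_\Omega(f(x); df_x(v)) = k_{\mathbb B^n}(x; v)$, whose right-hand side is positive for $v\neq 0$. Next, because $\mathbb B^n$ has rank one, every minimal disc in $\mathbb B^n$ is a complex geodesic $\varphi\colon \Delta\to \mathbb B^n$, and the composition $f\circ\varphi\colon \Delta\to \Omega$ is then a $C^1$-smooth totally geodesic isometric embedding that extends continuously to $\partial\Delta$. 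The hypothesis $[f_*(v)]\in \mathcal{RC}(\Omega)$ makes $f\circ\varphi$ tangential to $\mathcal{RC}(\Omega)$, so Proposition \ref{1-disc} yields that $f\circ\varphi$ is holomorphic or anti-holomorphic. Applying Lemma \ref{holo-disc} with $\mathbb B^n$ as the irreducible bounded symmetric domain and $f$ (viewed into the ambient $\mathbb C^N\supset \Omega$) as the target map then promotes this to the global conclusion that $f$ itself is either holomorphic or anti-holomorphic on $\mathbb B^n$.

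For the standard-embedding assertion, assume without loss of generality that $f$ is holomorphic and fix $z_0\in \partial\mathbb B^n$. For each interior point $p\in \mathbb B^n$, the complex affine line through $p$ and $z_0$ meets $\mathbb B^n$ in a complex geodesic $\varphi_p\colon \Delta\to \mathbb B^n$ having $z_0$ as one boundary endpoint, and the family $\{\varphi_p\}_p$ covers $\mathbb B^n$. Each $f\circ\varphi_p$ is a holomorphic rank-one isometric disc, so by Corollary \ref{unique minimal disc} together with the continuous boundary extension of $f$, $f\circ\varphi_p(\Delta)$ is a minimal disc of $\Omega$ whose closure contains $f(z_0)$; hence $f(\mathbb B^n)\subset V_{f(z_0)}$. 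Writing $V_{f(z_0)} = F(\mathbb B^{p+1})$ with $F\colon (\mathbb B^{p+1}, b_{\mathbb B^{p+1}})\to (\Omega, b_\Omega)$ the holomorphic Bergman isometric embedding recalled in Section 2, the map $\widetilde f := F^{-1}\circ f\colon \mathbb B^n\to \mathbb B^{p+1}$ is holomorphic; running the two-sided inequality from the proof of Proposition \ref{1-disc} (where rank-one directions make Bergman and Kobayashi norms coincide) shows $\widetilde f$ is an infinitesimal Kobayashi isometry. Since Kobayashi and Bergman metrics agree on complex balls, $\widetilde f$ is a holomorphic Bergman isometric embedding between complex balls, and Calabi's rigidity of holomorphic isometric immersions into complex space forms forces $\widetilde f$, and hence $f = F\circ \widetilde f$, to be a standard (totally geodesic) embedding.

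The most delicate step will be the uniform passage $f(\mathbb B^n)\subset V_{f(z_0)}$: it requires combining the continuous boundary extension of $f$ with the uniqueness in Corollary \ref{unique minimal disc} to guarantee that every minimal disc $f\circ\varphi_p(\Delta)$ in the family accumulates at the single boundary point $f(z_0)$. Once this reduction is in place, the remaining ingredients, namely the infinitesimal comparison of Kobayashi and Bergman norms along rank-one directions and Calabi's rigidity between complex hyperbolic balls, are essentially classical.
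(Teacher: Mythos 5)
Your proposal is correct and follows the route the paper intends: the corollary is stated as an immediate consequence of Proposition \ref{1-disc} applied to the affine-disc complex geodesics of $\mathbb B^n$, upgraded globally by Lemma \ref{holo-disc}, with the standard-embedding claim coming from the containment $f(\mathbb B^n)\subset V_{f(z_0)}\cong F(\mathbb B^{p+1})$ and rigidity of holomorphic Kobayashi (equivalently Bergman) isometries between balls. The only nitpick is that you only need $f\circ\varphi_p(\Delta)\subset V_{f(z_0)}$ (which is what the proof of Proposition \ref{1-disc} actually delivers), not that each $f\circ\varphi_p(\Delta)$ is itself a minimal disc; that stronger fact is a consequence, not an input.
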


\begin{proof}[Proof of Theorem \ref{main-2}]
Let $r$ be the rank of $\Omega$. We will use induction on $r$. For $r=1$, it is proved in Corollary \ref{ball to BSD}. Assume that $r>1$. Let $p\in \Omega$ be a general point. We may assume $p=0$ and $f(0)=0$. We will show that $f$ maps $r$-dimensional totally geodesic polydisc to $r$-dimensional totally geodesic polydisc and
therefore 
$$ [f_*(v)]\in \mathcal {C}_0(\Omega')$$
for all $[v]\in \mathcal {C}_0(\Omega)$.

For a given rank $(r-k)$, $k<r$ boundary component $C$ of $\Omega$, choose a totally geodesic holomorphic disc $\Delta_C\subset \Omega$ passing through $0$ such that  
$$T C=\mathcal N_{[T_0\Delta_C]}.$$
Let $\gamma:\mathbb R\to \Delta_C$ be a complete geodesic passing through $0\in \Delta_C$ such that 
$$p_\gamma:=\lim_{t\to \infty}\gamma(t)\in C.$$
Since $f\circ \gamma$ is a complete geodesic, there exists a unique boundary component $D$ of $\Omega'$ such that 
$f(p_\gamma)$ is contained in $D$. Moreover, since $f$ is an isometry, $f(C)$ should be contained in the same boundary component $C$. Hence $D$ depends only on $C$. Let $C_1$ be another boundary component of $\Omega$ of rank $(r-k)$ such that $\overline{C}\cap\overline{C_1}$ is a rank $(r-k-1)$ boundary component. By continuity of $f$, we obtain $f(\overline{C}\cap \overline{C_1})\subset \overline{D}\cap\overline{D_1}, $ 
where $D_1$ is the boundary component of $\Omega'$ that contains $f(C_1)$. 
Furthermore, since $C\neq C_1$, the distance between any two points $p\in C, q\in C_1$ and hence the distance of any $f(p)$, $p\in C$ and $f(q)$, $q\in C_1$ is infinite. Therefore $D\neq D_1$ unless $D$ and $D_1$ are points in the Shilov boundary of $\Omega'$. In the latter case, $f$ is constant on $C$ and $C_1$. Since $\overline D\cap\overline{D_1}\neq \emptyset$, we obtain $D=D_1$, i.e. $f(C)=f(C_1).$

Suppose that there exists an open set $U$ in the rank $(r-k)$ boundary orbit of $\Omega$ such that if a rank $(r-k)$ boundary component $C$ of $\Omega$ satisfies $C\cap U\neq\emptyset$, then $f$ is constant on $C$. Choose two rank $(r-k)$ boundary components $C_1, C_2$ such that $C_1\cap U$ and $C_2\cap U$ are not empty and $\overline C_1\cap \overline C_2\neq\emptyset.$ Then by continuity of $f$, we obtain $f(C_1)=f(C_2)$. Since such $C_1$ and $C_2$ are general, there exists an open set $V$ in the Shilov boundary of $\Omega$ such that $f$ is constant on $V$. Let $x\in V$.
Choose a totally geodesic polydisc $\Delta^{r}\subset \Omega$ such that $x\in (\partial\Delta)^{r}$. Then $V\cap (\partial\Delta)^{r}$ is open in $(\partial\Delta)^{r}$. We may assume $x=(1,\ldots,1)\in (\partial\Delta)^r$. Choose 
an $\epsilon_0>0$ such that $y=(e^{i\theta_1},\ldots, e^{i\theta_{r}})\in V\cap(\partial\Delta)^r$ for all $\theta_i\in (-\epsilon_0,\,\epsilon_0)$.
Choose geodesics $\gamma_i$ in $\Delta$ such that $1=\gamma_i(\infty)$ and $e^{i\theta_i}=\gamma_i(-\infty)$.
Then $\gamma:=(\gamma_1,\ldots,\gamma_{r})$ is a geodesic in $\Delta^{r}$ such that $x=\gamma(\infty)$ and $y=\gamma(-\infty)$
and therefore $f\circ\gamma$ is a geodesic in $\Omega'$ joining $f(x)=f\circ \gamma(\infty)$ and $f(y)=f\circ\gamma(-\infty)$.
But Lemma \ref{polyD} shows that $f\circ\gamma(\infty)$ and $f\circ\gamma(-\infty)$ should be contained in different boundary components, contradicting the assumption. 
Therefore for general  boundary component $C$ with rank$(C)>0$, $f(C)$ is not a point. Moreover, since $f$ preserves the boundary stratification, we obtain that for any general boundary component $C$, $f(C)$ is contained in a boundary component with rank greater or equal to the rank of $C$.

Let $C$ be a rank $(r-1)$ characteristic subdomain of $\Omega$ and let $\Delta_C$ be a minimal disc such that $\Delta_C\times C$ is totally geodesic in $\Omega$. Assume that $(0,0)\in \Delta_C\times C.$ Choose a unit speed complete geodesic $\gamma$ in $\Delta_C\times \{0\}$ connecting $(0,0)$ and a point $(p,0)\in \partial\Delta_C\times C$. Then there exists a polydisc $Q_p$ passing through $0$ that satisfies the condition in Lemma~\ref{polyD} with respect to $f(\gamma(\cdot), 0)$. By the condition \eqref{vector-rank}, we obtain
$\dim Q_p\leq r$. Moreover, similar to the proof of Lemma~\ref{component_totally}, we obtain
$$f(\gamma(t), x)\subset Q_p\times Q_p^\perp,\quad x\in C.$$

Define $F=(\Phi, \Psi)\colon\mathbb R\times C\to Q_p\times Q_p^\perp$ by 
$$F(t, x)=f(\gamma(t), x).$$ 
We may assume that $Q_p$ is the maximal polydisc such that each component of $\Phi$ is a unit speed geodesic in a disc. 
Since
$F$ is an isometry, as in the proof of Lemma~\ref{component_totally}, we obtain 
$$
	\lim_{t\to\infty} \Phi(t,x) = \lim_{t\to\infty} \Phi(t,y) \quad \text{ and } \lim_{t\to -\infty} \Phi(t,x) = \lim_{t\to -\infty} \Phi(t,y)
$$
and hence 
$$\Phi(\mathbb R,x) = \Phi(\mathbb R,y)\quad x, y\in C,$$
i.e. the image $\Phi(\mathbb R\times C)$ is real one-dimensional. Since $F$ is a totally geodesic isometry, this implies that $\Psi(t,\cdot):C\to Q_p^\perp$ is a totally geodesic isometry for all $t\in \mathbb  R.$ Furthermore, since $\partial_t \Phi\neq 0$, by condition \eqref{vector-rank}, we obtain that for any $s>0$,
$${\rm rank}~\Psi_*(s\partial/\partial t+v)\leq  r-\dim Q_p\leq r-1$$ 
for any $v\in T_x  C$.
Therefore by 
continuity of the derivatives of $f$, we obtain
$${\rm rank}~\Psi_*(v)\leq  r-1$$
for any $v\in T_x  C$.
By
induction argument, we may assume that $\Psi(t,\cdot)$ is a standard holomorphic embedding on $ C$ for all $t$ and $\Psi(t, \cdot)$ maps rank one vector to rank one vector.

Choose another rank one complete geodesic $\widetilde \gamma\subset \Delta_C$ passing through $0$. Then by the same argument, we obtain
$f(\widetilde\gamma(\mathbb R)\times C)\subset Q_{\widetilde p}\times Q_{\widetilde p}^\perp$, where $\widetilde p=\lim_{t\to 1}\widetilde \gamma(t)$.
Then either $Q_p\cap Q_{\tilde p}=\{0\}$ for some $\widetilde\gamma$ or $Q_p=Q_{\tilde p}$ for all $\tilde\gamma$. In the first case, we obtain $$f(\{0\}\times C)=(\Phi(0, C), \Psi(0, C))\subset \{0\}\times \left(Q_p^\perp\cap Q_{\widetilde p}^\perp\right).$$
Therefore any rank one vector $v\in TC$ is mapped to rank one vector. 
In the second case, we obtain 
$$f(\Delta_C\times C)\subset Q_p\times Q_p^\perp.$$ 
Write $f=(\Phi, \Psi):\Delta_C\times C\to Q_p\times Q_p^\perp.$
Since  $\Phi(\mathbb R,C)$ is real one-dimensional and $\Psi(t, \cdot)$ is a standard map on each polydisc, as in the proof of Lemma~\ref{component_totally}, we obtain that $\Psi$ is independent of $\zeta\in \Delta_C$.
Therefore $\Phi(\cdot, x):\Delta_C\to Q_p$ is a totally geodesic isometry for all $x\in C$.
Choose a complete geodesic $\gamma_1$ in $\Delta_C$. Then by the same argument we obtain
$$
	\Phi(\gamma_1(\mathbb R), x)=\Phi(\gamma_1(\mathbb R), y),\quad x, y\in C.
$$
Since $\gamma_1$ is arbitrary and $\Phi(\cdot, x)$ is an isometry, $\Phi$ is independent of $x\in C$. Therefore $f$ maps any rank one vector in $TC$ to rank one vector. 
For any $[v]\in \mathcal C(\Omega)$, there exists rank $(r-1)$ characteristic subdomain $C$ such that $v\in TC$. Hence $f$ preserves the rank one vectors. Then by Lemma~\ref{holo-disc} and Proposition~\ref{1-disc} we can complete the proof.
\end{proof}

\end{document}